\newcolumntype{C}{>{$}c<{$}}
\newcommand{\highlight}[1]{%
  \colorbox{yellow!50}{$\displaystyle#1$}}
\newcommand{\mld}{\operatorname{mld}}
\newcommand{\rmld}{\operatorname{rmld}}
\newcommand{\Aut}{\operatorname{Aut}}
\title{Coloured graphical models \\ and their symmetries}
\author{Isobel Davies}
\address{Max Planck Institute for Mathematics in the Sciences\\ Inselstraße 22, 04103 Leipzig\\
\email{davies@mis.mpg.de}}
\author{Orlando Marigliano}
\address{KTH Royal Institute of Technology\\
Lindstedtsvägen 25, 10044 Stockholm, Sweden\\
\email{orlandom@kth.se}}
\keywords{Coloured graphs, Gaussian graphical models, graph symmetries}
\begin{document}

\maketitle

\begin{abstract} 
Coloured graphical models are Gaussian statistical models determined by an undirected coloured graph.
These models can be described by linear spaces of symmetric matrices. 
We outline a relationship between the symmetries of the graph and the linear forms that vanish on the reciprocal variety of the model. In particular, we give four families for which such linear forms are completely described by symmetries.
\end{abstract}

\section{Introduction}
Coloured graphical models are a class of Gaussian statistical models first introduced in~\cite{lauritzen}
to model situations where some of the covariates have approximately equal empirical concentrations.
These equalities are represented by a coloured graph.
The set of concentration matrices associated to a coloured graphical model is a linear subspace
of the set of real symmetric $n\times n$ matrices.

Following the approach of algebraic statistics we view  maximum likelihood (ML) estimation as a problem in algebraic geometry, see for instance~\cite[Ch.\ 7]{sullivant}. In our case,
we consider the complex linear space of symmetric matrices $\mathcal L\subseteq \mathbb S^n$ defined by a given coloured graphical model and its 
\emph{reciprocal variety} $\mathcal L^{-1}$, which is defined by the matrix inverses of the points in $\mathcal L$.
In particular, algebraic statisticians are interested in a collection of algebraic properties of $\mathcal L$, such as its ML degree $\mld(\mathcal L)$, its reciprocal ML degree $\rmld(\mathcal L)$, the degree $\deg(\mathcal L^{-1})$ and the ideal $I(\mathcal L^{-1})$ defining the reciprocal variety. For the definitions of these properties, see~\cite{bernd-claudia}.

In~\cite{bernd-caroline}, Sturmfels and Uhler
compute some of these algebraic properties for coloured graphical models whose underlying graph is the four-cycle.
Inspired by their work, we are interested in understanding these properties for coloured graphical models.
In this article, we explore the notion of symmetry of a coloured graph and study its connection with the linear part of the ideal $I(\mathcal L^{-1})$. In particular, we focus on \emph{uniform coloured graphs}, whose symmetries coincide with the usual graph symmetries of the underlying uncoloured graph.
For a number of families of uniform coloured graphs we completely describe the linear part of the ideal $I(\mathcal L^{-1})$ via symmetries. For this, we use the results in~\cite{bernd-claudia} on pencils of quadrics.

\paragraph{Notation.}
Let $G = (V,E)$ be a simple undirected graph, let $n = |V|$ and $E^c$ denote the set of edges of the graph complement $G^c$. The (uncoloured) \emph{adjacency matrix} of $G$ is the $n\times n$ matrix whose $(i,j)$-th entry is $1$ if $(i,j)\in E$ and $0$ otherwise.
A \emph{colouring} of $G$ is a partition of $V$ together with a partition of $E$.
A \emph{colour} is an equivalence class with respect to one of these partitions. 

Let $G$ be a coloured graph with $n$ vertices and $d$ colours $\gamma_1,\dotsc,\gamma_d$. For $k=1,\dotsc,d$ we define the matrix $A_k$ associated to the colour $\gamma_k$ as follows. If $\gamma_k$ is a vertex colour then $(A_k)_{ii} = 1$ if vertex $i$ has the colour $\gamma_k$, while the other entries are all zero. If $\gamma_k$ is an edge colour then $(A_k)_{ij} = 1$ if edge $(i,j)$ exists and has the colour $\gamma_k$, and all the other entries are zero.
The (coloured) \emph{adjacency matrix} $A$ of $G$ is the matrix
\[
\sum_{k=1}^d \lambda_k A_k
\in \mathbb Z[\lambda_1,\dotsc,\lambda_d]^{n\times n}.
\]

Let $G$ be a coloured graph with $n$ vertices and $d$ colours. The associated linear space of symmetric matrices $\mathcal L$ is the subspace
\[
\left\{\sum_{k=1}^d \lambda_k A_k : \lambda_1,\dotsc,\lambda_d\in \mathbb C\right\} 
\]
of the space of complex symmetric $n\times n$ matrices $\mathbb S^n$. The \emph{reciprocal variety} $\mathcal L^{-1}$ is the Zariski-closure of the set
\[
\{A^{-1} : A\in \mathcal L \text{ invertible}\}\subseteq \mathbb S^n.
\]
Its ideal is denoted by $I(\mathcal L^{-1})$.

\section{Linear forms from symmetries}

We start by defining graph symmetries, following the approach in~\cite[Sec.\ 5]{lauritzen}. 

\begin{dfn}
Let $A$ be the uncoloured, resp.\ coloured adjacency matrix of an uncoloured, resp.\ coloured graph $G$. A \emph{symmetry} of $G$ is an $n\times n$ permutation matrix $B$ such that
$BAB^{-1} = A$.
\end{dfn}

Such a symmetry gives a set of linear forms that vanish on the reciprocal variety $\mathcal L^{-1}$: 

\begin{prop}\label{symmetries}
Let $G$ be a coloured graph, $B$ a symmetry of $G$ and $X$ a generic element of $\mathbb S^n$. The binomial linear forms defined by all distinct entries of the matrix $BXB^{-1} - X$ belong to $I(\mathcal L^{-1})$.
\end{prop}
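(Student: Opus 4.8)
The plan is to show that conjugation by $B$ fixes every point of $\mathcal L^{-1}$, from which the statement follows immediately. First I would unpack the symmetry condition. Since $A = \sum_{k=1}^d \lambda_k A_k$ with the $\lambda_k$ algebraically independent, the identity $BAB^{-1} = A$ in $\mathbb Z[\lambda_1,\dotsc,\lambda_d]^{n\times n}$ forces $BA_kB^{-1} = A_k$ for each $k$ by comparing coefficients. Consequently, for any matrix $M = \sum_k \mu_k A_k \in \mathcal L$ we have $BMB^{-1} = \sum_k \mu_k\, B A_k B^{-1} = \sum_k \mu_k A_k = M$; that is, $B$ does not merely preserve $\mathcal L$ as a set but fixes it pointwise under conjugation.

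Next I would pass to inverses. If $M \in \mathcal L$ is invertible then $B M^{-1} B^{-1} = (B M B^{-1})^{-1} = M^{-1}$, so conjugation by $B$ fixes every matrix in the set $\{M^{-1} : M \in \mathcal L \text{ invertible}\}$. Writing $X = (x_{ij})$ for the generic symmetric matrix, conjugation by the permutation matrix $B$ simply permutes the coordinates $x_{ij}$ according to the underlying permutation, so each entry of $BXB^{-1} - X$ is a difference of two coordinates (or zero), hence a binomial linear form; and by the previous paragraph every such form vanishes on that set. Since the vanishing locus of a polynomial is Zariski closed, these linear forms also vanish on $\mathcal L^{-1}$, which is by definition the Zariski closure of that set. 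Therefore they lie in $I(\mathcal L^{-1})$, and collecting one representative per distinct entry gives the claimed family of binomial linear forms.

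There is no serious obstacle here; the one point deserving care is the passage from the polynomial identity $BAB^{-1} = A$ to the pointwise statement $BMB^{-1} = M$ for all $M \in \mathcal L$ — i.e.\ recognising that a graph symmetry acts trivially on $\mathcal L$ rather than merely stabilising it — after which everything is formal. One could alternatively phrase the argument representation-theoretically: the symmetries of $G$ form a group acting on $\mathbb S^n$, $\mathcal L$ lies in the associated fixed subspace, and matrix inversion is equivariant for this action, so $\mathcal L^{-1}$ lies in the fixed subspace as well, and that subspace is cut out precisely by the binomial linear forms in question.
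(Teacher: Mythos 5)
Your proof is correct and follows essentially the same route as the paper: the symmetry condition forces conjugation by $B$ to fix $\mathcal L$ pointwise, inversion is equivariant, and the resulting identities pass to the Zariski closure. The only difference is cosmetic — the paper inverts the generic adjacency matrix once over $\mathbb Q(\lambda_1,\dotsc,\lambda_d)$ while you argue matrix by matrix, and you make explicit the coefficient-comparison step $BA_kB^{-1}=A_k$ and the closure step that the paper leaves implicit.
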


\begin{proof}
If $B$ is a symmetry of $G$, then $BA^{-1}B^{-1} = A^{-1}$ in $\mathbb Q(\lambda_1,\dotsc,\lambda_d)^{n\times n}$ by taking inverses. Thus, a generic element $X $ of $\mathcal L^{-1}$ must satisfy the equation $BXB^{-1} - X = 0,$ whose entries are binomial linear forms.
\end{proof}

We call the linear forms that come from Proposition~\ref{symmetries} \emph{induced by symmetries}. When the linear part of $I(\mathcal L^{-1})$ is generated by these linear forms, we also say it is \emph{induced by symmetries}.

\begin{exa}\label{five-cycle}
Let $G$ be the following coloured $5$-cycle:
\begin{center}
\begin{tikzpicture}
\begin{scope}[scale=1.5]
\draw (0.2,0) node[draw,circle,inner sep=2pt,fill=blue, label={[xshift=-0.3cm,yshift=-0.5cm]5}](5){};
\draw (0,0.7) node[draw,circle,inner sep=2pt,fill=blue, label={[xshift=-0.3cm,yshift=-0.2cm]1}](1){};
\draw (1,0) node[draw,circle,inner sep=2pt,fill=blue, label ={[xshift=0.3cm, yshift=-0.5cm]4}](4){};
\draw (1.2,0.7) node[draw,circle,inner sep=2pt,fill=blue, label ={[xshift=0.3cm,yshift=-0.2cm]3}](3){};
\draw (0.6,1.2) node[draw,circle,inner sep=2pt,fill=blue,label={2}](2){};
\draw[-] (1) edge[OliveGreen,thick] (2);
\draw[-] (2) edge[red,thick] (3);
\draw[-] (3) edge[red,thick] (4);
\draw[-] (4) edge[red,thick] (5);
\draw[-] (5) edge[red,thick] (1);
\draw[dashed] (1,0) edge[black,thick] (0.3,.95);
\end{scope}
\end{tikzpicture}
\end{center}
The only nontrivial symmetry of $G$ is the reflection $B_\tau$ along the depicted axis.
The matrix $B_\tau X B_\tau^{-1} - X$ equals
\[
\begin{pmatrix}
-x_{11}+x_{22}& 0& -x_{13}+x_{25}& -x_{14}+x_{24}& -x_{15}+x_{23}\\
0& x_{11}-x_{22}& x_{15}-x_{23}& x_{14}-x_{24}& x_{13}-x_{25}\\
-x_{13}+x_{25}& x_{15}-x_{23}& -x_{33}+x_{55}& -x_{34}+x_{45}& 0\\
-x_{14}+x_{24}& x_{14}-x_{24}& -x_{34}+x_{45}& 0& x_{34}-x_{45}\\
-x_{15}+x_{23}& x_{13}-x_{25}& 0& x_{34}-x_{45}& x_{33}-x_{55}
\end{pmatrix}.
\]
Hence, the linear forms induced by symmetries in $I(\mathcal L^{-1})$ are:
\[
x_{11} - x_{22}, \quad x_{33} - x_{55}, \quad x_{13} - x_{25}, \quad
x_{14} - x_{24}, \quad x_{15} - x_{23}, \quad x_{34} - x_{25}.
\]
These do not generate the whole linear part of $I(\mathcal L^{-1})$. For instance, the linear form $x_{14}+x_{44}-x_{35}-x_{55}$ is contained in the ideal but is not a linear combination of the above.
\end{exa}
\begin{exa}\label{example3}
Let $G$ be the following disconnected graph:
\begin{center}
\begin{tikzpicture}
\begin{scope}[scale=1.5]
    \draw (0,0) node[draw,circle,inner sep=2pt,fill=blue, label={[xshift=-0.3cm,yshift=-0.5cm]4}](4){};
  \draw (0,1) node[draw,circle,inner sep=2pt,fill=BurntOrange, label={[xshift=-0.3cm,yshift=-0.2cm]1}](1){};
  \draw (1,0) node[draw,circle,inner sep=2pt,fill=blue, label ={[xshift=0.3cm, yshift=-0.5cm]3}](3){};
  \draw (1,1) node[draw,circle,inner sep=2pt,fill=Purple, label ={[xshift=0.3cm,yshift=-0.2cm]2}](2){};
  \draw[-] (1) edge[red,thick] (3);
  \draw[-] (3) edge[red,thick] (4);
  \draw[-] (1) edge[red,thick] (4);
  \end{scope}
 \end{tikzpicture}
\end{center}
The only nontrivial symmetry of $G$ is the permutation $B_\rho$ of vertices $3$ and $4$. The matrix $B_\rho XB_\rho^{-1}-X$ gives all the binomial linear forms in the following list of generators of the linear part of $I(\mathcal{L}^{-1})$:
\[
x_{33} - x_{44}, \quad x_{13} - x_{14}, \quad x_{12}, \quad
x_{23}, \quad x_{24}.
\]
The following proposition explains the three other generators.
\end{exa}

\begin{prop}\label{isolated}
Let $G$ be a coloured graph and
let $i,j$ be a pair of vertices belonging to different connected components of $G$. Then we have
$x_{ij}\in I(\mathcal L^{-1})$.
\end{prop}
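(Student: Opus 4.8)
The plan is to exploit the block structure that the absence of edges between components forces on every matrix of $\mathcal L$, and hence on its inverse.

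First I would reorder the vertices of $G$ so that the vertices of each connected component occupy a consecutive block of indices. Relabelling the vertices conjugates every matrix in sight by one fixed permutation matrix and permutes the coordinates $x_{ab}$ accordingly, so this does not affect the claim. After the reordering I claim that every $M\in\mathcal L$ is block-diagonal, with one block per connected component of $G$. Indeed, $\mathcal L$ is spanned by the colour matrices $A_1,\dotsc,A_d$: a vertex-colour matrix is diagonal, and an edge-colour matrix $A_k$ has $(A_k)_{ab}\neq 0$ only when $(a,b)\in E$, which forces $a$ and $b$ to lie in the same component. Hence each $A_k$, and therefore each element of $\mathcal L$, has zero entries in every position $(a,b)$ with $a$ and $b$ in different components; that is, it is block-diagonal in the chosen ordering.

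Next, the inverse of an invertible block-diagonal matrix is block-diagonal with the same block structure, since each diagonal block is inverted separately. Therefore, for every invertible $M\in\mathcal L$ we have $(M^{-1})_{ij}=0$ whenever $i$ and $j$ lie in different connected components of $G$. In particular the coordinate $x_{ij}$ vanishes identically on the set $\{M^{-1}:M\in\mathcal L\text{ invertible}\}$, which is Zariski-dense in $\mathcal L^{-1}$ by definition; a polynomial vanishing on a dense subset vanishes on the whole variety, so $x_{ij}\in I(\mathcal L^{-1})$.

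There is essentially no obstacle here. The only point that warrants a word of care is the passage from vanishing on the dense (constructible) subset to membership in the ideal of its Zariski closure, and that is immediate.
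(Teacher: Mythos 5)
Your proof is correct and follows essentially the same route as the paper's: both rest on observing that (after a harmless reordering of vertices) every matrix in $\mathcal L$ is block-diagonal according to the connected components, that inversion preserves this block structure, and that vanishing on the dense set of inverses gives membership in $I(\mathcal L^{-1})$. Your version merely spells out the reordering and the density step a little more explicitly.
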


\begin{proof}
The adjacency matrix $A$ of $G$ can be written as a block diagonal matrix whose blocks are the adjacency matrices of the connected components of $G$. Since its inverse $A^{-1}$ retains this block structure, a generic element $X$ of $\mathcal L^{-1}$ must have a zero entry at each pair of indices $(i,j)$ as in the statement.
\end{proof}


Let $I'$ be the ideal generated by all linear forms induced by symmetries of $G$ and all linear forms found by applying Proposition~\ref{isolated}. We define a further linear subspace $\mathcal L'$ as the vanishing set $V(I')$. Like $\mathcal L$, this linear subspace is associated to a coloured graph $G'$, whose colouring is induced by the linear forms in $I'$.
In
~\cite[Sec.\ 5]{lauritzen}, the linear subspace $\mathcal L'$ is
called an \emph{RCOP model}.

\begin{exa}\label{g-prime}
The linear forms found in Examples~\ref{five-cycle} and~\ref{example3} give rise to the following graphs $G_1'$ and $G_2'$. 
\begin{center}
\begin{tikzpicture}
\begin{scope}[scale=1.5]
\draw (0.2,0) node[draw,circle,inner sep=2pt,fill=blue, label={[xshift=-0.3cm,yshift=-0.5cm]5}](5){};
\draw (0,0.7) node[draw,circle,inner sep=2pt,fill=BurntOrange, label={[xshift=-0.3cm,yshift=-0.2cm]1}](1){};
\draw (1,0) node[draw,circle,inner sep=2pt,fill=Purple, label ={[xshift=0.3cm, yshift=-0.5cm]4}](4){};
\draw (1.2,0.7) node[draw,circle,inner sep=2pt,fill=blue, label ={[xshift=0.3cm,yshift=-0.2cm]3}](3){};
\draw (0.6,1.2) node[draw,circle,inner sep=2pt,fill=BurntOrange,label={2}](2){};
\draw[-] (1) edge[OliveGreen,thick] (2);
\draw[-] (2) edge[red,thick] (3);
\draw[-] (3) edge[Goldenrod,thick] (4);
\draw[-] (4) edge[Goldenrod,thick] (5);
\draw[-] (5) edge[red,thick] (1);
\draw[-] (2) edge[YellowGreen,thick] (4);
\draw[-] (3) edge[CornflowerBlue,thick] (5);
\draw[-] (4) edge[YellowGreen,thick] (1);
\draw[-] (3) edge[CarnationPink,thick] (1);
\draw[-] (2) edge[CarnationPink,thick] (5);
\end{scope}
\end{tikzpicture}
\qquad
\begin{tikzpicture}
\begin{scope}[scale=1.5]
    \draw (0,0) node[draw,circle,inner sep=2pt,fill=blue, label={[xshift=-0.3cm,yshift=-0.5cm]4}](4){};
  \draw (0,1) node[draw,circle,inner sep=2pt,fill=BurntOrange, label={[xshift=-0.3cm,yshift=-0.2cm]1}](1){};
  \draw (1,0) node[draw,circle,inner sep=2pt,fill=blue, label ={[xshift=0.3cm, yshift=-0.5cm]3}](3){};
  \draw (1,1) node[draw,circle,inner sep=2pt,fill=Purple, label ={[xshift=0.3cm,yshift=-0.2cm]2}](2){};
  \draw[-] (1) edge[red,thick] (3);
  \draw[-] (3) edge[OliveGreen,thick] (4);
  \draw[-] (1) edge[red,thick] (4);
  \end{scope}
 \end{tikzpicture}
 \end{center}
\end{exa}

By Proposition~\ref{symmetries}, every symmetry of a coloured graph $G$ is also a symmetry of $G'$. We also see that both $\mathcal L$ and $\mathcal L^{-1}$ are contained in $\mathcal L'$.
The next proposition demonstrates how $\mathcal L'$ can be used as a lower-dimensional ambient space to compute the ML degree of $\mathcal L$.

\begin{prop}
Let $S'$ be a general matrix of $\mathcal L'$ and let $(\mathcal L^{\perp})' = \mathcal L^\perp\cap \mathcal L'$, where $\mathcal L^\perp\subseteq \mathbb S^n$ is the orthogonal complement to $\mathcal L$ with respect to the trace inner product. Let $(\mathcal L^\perp)' + S'$ denote the affine space $\{X' + S'\mid X'\in (\mathcal L^\perp)'\}$.
The ML degree of $\mathcal L$ is the cardinality of the set
\[
\mathcal L^{-1}\cap ((\mathcal L^\perp)' + S').
\]
\end{prop}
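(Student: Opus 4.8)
The plan is to transfer the standard variety-theoretic description of the ML degree of a linear concentration model from the ambient space $\mathbb S^n$ down to the smaller space $\mathcal L'$, using the two containments recorded just above the proposition: $\mathcal L\subseteq\mathcal L'$ and $\mathcal L^{-1}\subseteq\mathcal L'$. Recall from~\cite{bernd-claudia} that $\mld(\mathcal L)$ equals the number of points of $\mathcal L^{-1}\cap(\mathcal L^\perp+S)$ for a generic $S\in\mathbb S^n$; equivalently, there is a dense open subset $U\subseteq\mathbb S^n$ on which $|\mathcal L^{-1}\cap(\mathcal L^\perp+S)|=\mld(\mathcal L)$. Since the trace form restricts to a non-degenerate form on $\mathcal L$ (for instance because the matrices $A_1,\dots,A_d$ have pairwise disjoint supports), we have $\mathcal L+\mathcal L^\perp=\mathbb S^n$, and a fortiori $\mathcal L'+\mathcal L^\perp=\mathbb S^n$; hence the linear map $\pi\colon\mathcal L'\to\mathbb S^n/\mathcal L^\perp$ induced by the inclusion $\mathcal L'\hookrightarrow\mathbb S^n$ is surjective.

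First I would carry out a genericity reduction. The cardinality $|\mathcal L^{-1}\cap(\mathcal L^\perp+S)|$ depends on $S$ only through its class in $\mathbb S^n/\mathcal L^\perp$, so both $U$ and its complement are unions of cosets of $\mathcal L^\perp$, and the image $B$ of $\mathbb S^n\setminus U$ is a proper closed subset of $\mathbb S^n/\mathcal L^\perp$ off which the cardinality equals $\mld(\mathcal L)$. Because $\pi$ is surjective, $\pi^{-1}(B)$ is a proper closed subset of $\mathcal L'$, so for a general $S'\in\mathcal L'$ the coset $\mathcal L^\perp+S'$ is already ``generic enough'' and therefore $|\mathcal L^{-1}\cap(\mathcal L^\perp+S')|=\mld(\mathcal L)$.

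It then remains to identify $\mathcal L^{-1}\cap(\mathcal L^\perp+S')$ with $\mathcal L^{-1}\cap((\mathcal L^\perp)'+S')$ when $S'\in\mathcal L'$. Here one uses $\mathcal L^{-1}\subseteq\mathcal L'$ to intersect harmlessly with $\mathcal L'$,
\[
\mathcal L^{-1}\cap(\mathcal L^\perp+S')=\mathcal L^{-1}\cap\bigl((\mathcal L^\perp+S')\cap\mathcal L'\bigr),
\]
together with the elementary equality $(\mathcal L^\perp+S')\cap\mathcal L'=(\mathcal L^\perp\cap\mathcal L')+S'=(\mathcal L^\perp)'+S'$, which holds because $S'\in\mathcal L'$ (a matrix $Y$ lies in the left-hand side exactly when $Y-S'\in\mathcal L^\perp$ and $Y-S'\in\mathcal L'$). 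Combining the two steps gives $\mld(\mathcal L)=|\mathcal L^{-1}\cap((\mathcal L^\perp)'+S')|$ for a general $S'\in\mathcal L'$, which is the assertion.

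I expect the main obstacle to be precisely the genericity reduction of the first step: a general $S'\in\mathcal L'$ is very far from being a general element of $\mathbb S^n$, so one has to argue carefully that the coset of $\mathcal L^\perp$ it determines is still generic enough for the known count $\mld(\mathcal L)$ to be attained. This is exactly where the inclusion $\mathcal L\subseteq\mathcal L'$ enters, through the surjectivity of $\pi$. The remaining two set-theoretic manipulations are purely formal linear algebra.
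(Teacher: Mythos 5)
Your argument is correct and follows essentially the same route as the paper: both proofs transfer the genericity of the slice from $\mathbb S^n$ to $\mathcal L'$ using $\mathcal L\subseteq\mathcal L'$ (hence $\mathcal L'+\mathcal L^\perp=\mathbb S^n$), and then reduce to the set-theoretic identity $(\mathcal L^\perp+S')\cap\mathcal L'=(\mathcal L^\perp)'+S'$ together with $\mathcal L^{-1}\subseteq\mathcal L'$. The only cosmetic difference is that you phrase the genericity step via the surjection $\mathcal L'\to\mathbb S^n/\mathcal L^\perp$, whereas the paper obtains a general $S'\in\mathcal L'$ as the projection of a general $S\in\mathbb S^n$ along an explicit three-term decomposition.
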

\begin{proof}
By definition, the ML degree of $\mathcal L$ is the cardinality of
$\mathcal L^{-1} \cap
(\mathcal L^\perp + S)$,
where $S$ is a general matrix of $\mathbb S^n$ and $\mathcal L^\perp\subseteq \mathbb S^n$ is the orthogonal complement of $\mathcal L$ in $\mathbb S^n$.
Since $\mathcal L\subseteq \mathcal L'$, we have $\operatorname{span}(\mathcal L', \mathcal L^\perp) = \mathbb S^n$.
We write all matrices $M\in \mathbb S^n$ as $M = M_1 + M_2 + M_3$, where the term $M_1$ is the part of $M$ contained in $(\mathcal L^\perp)'$, the term $M_2$ is the remaining part contained in $\mathcal L^\perp$, and $M_3$ is the remaining part contained in $\mathcal L'$.
Let $S' = S_1 + S_3.$ Since $S$ is general in $\mathbb S^n$, the matrix $S'$ is general in $\mathcal L'$, so it remains to show that
\[
(\mathcal L^\perp + S) \cap \mathcal L' = (\mathcal L^\perp)' + S'.
\]
The reverse inclusion is clear. For the forward inclusion,
Let $\Sigma \in (\mathcal L^\perp + S) \cap \mathcal L'$ and write $\Sigma = X + S$ with $X\in \mathcal L^\perp$. Write $\Sigma = X_1 + X_2 + S' + S_2$ using the above decomposition for $X$ and $S$. Since $\Sigma\in \mathcal L',$ we have $X_2 + S_2\in \mathcal L'$. By construction, $X_2 + S_2 = 0$, so $\Sigma = X_1 + S'\in (\mathcal L^\perp)' + S'$, as required.
\end{proof}


\section{Uniform coloured graphs}\label{uniform-coloured-graphs}

In this section, we apply Proposition~\ref{symmetries} to coloured graphs whose colouring has only one edge colour and one vertex colour.
Such a graph is uniquely determined by its underlying uncoloured graph, and we call it a
\emph{uniform coloured graph}. Its group of symmetries coincides with the group of symmetries of its uncoloured graph. We are interested in the question: when is the linear part of $I(\mathcal L^{-1})$ induced by symmetries?

The adjacency matrix of a uniform coloured graph has the form
\[
\lambda_1A_1 + \lambda_2A_2
\]
where $A_1$ is the $n\times n$ identity matrix and $A_2$ is the adjacency matrix of the underlying uncoloured graph.
Since the corresponding linear space $\mathcal L$ is two-dimensional, 
we may apply the results in \cite{bernd-claudia} on pencils of quadrics. 

\begin{dfn}[\cite{bernd-claudia}]
Let $\mathcal{L}$ be a linear space spanned by two symmetric matrices $A_1$ and $A_2$, such that $A_1$ is an invertible matrix. The \emph{Segre symbol} of $\mathcal{L}$ is the unordered list
\[
\sigma=\left[\sigma_1\ldots\sigma_r\right]
\]
specified by the Jordan canonical form of $A_1^{-1}A_2$
in the following way.
Let $\alpha_1,\dotsc,\alpha_r$ be the distinct eigenvalues of $A_1^{-1}A_2$.
Each $\sigma_i$ is itself a tuple of natural numbers associated to the eigenvalue $\alpha_i$.
The entries of $\sigma_i$ are precisely the sizes of the Jordan blocks associated to $\alpha_i$.
\end{dfn}

If $\mathcal L$ is the linear space associated to a uniform coloured graph, then its Segre symbol is a description of the spectrum of the underlying uncoloured graph.
More precisely, its entries $\sigma_i$ correspond to the distinct eigenvalues of $A_2$ and are of the form $(1,\dotsc,1)$, where the number of ones equals the multiplicity of the eigenvalue.

\begin{exa}
The spectrum of the Petersen graph is $(-2_{4}, 1_{5}, 3)$.
Thus, the linear space associated to its uniform coloured version
has Segre symbol $[1_4,1_5,1]$.
\end{exa}

Using Segre symbols, it follows by~\cite[Thms.\ 3.2, 4.2]{bernd-claudia} that
the number $r$ of distinct eigenvalues of $A_2$
determines certain algebraic properties of $\mathcal L$. We summarize these in Table~\ref{tablealgprop}, along with the specific values for the Petersen graph.
\begin{table}[htbp]
\begin{center}
\renewcommand*{\arraystretch}{1.5}
\begin{tabular}{|c|c|c|c|}
\hline
& Uniform Coloured Graph & Petersen Graph\\
\hline
$\deg(\mathcal L^{-1})$  &$r-1$& 2\\
$\operatorname{mld}(\mathcal L)$  &$r-1$& 2\\
$\operatorname{rmld}(\mathcal L)$  &$2r-3$ & 3\\
no. linear forms
 &$\binom{n+1}{2}-r$&52\\
no. quadratic forms
 & $\binom{r-1}{2}$&1 \\[5pt]
\hline
\end{tabular}
\renewcommand*{\arraystretch}{1}
\end{center}
\caption{
Algebraic properties of the linear space of symmetric matrices $\mathcal L$ associated to a uniform coloured graph and the uniform coloured Petersen graph. Here, $r$ denotes the number of distinct eigenvalues of the uncoloured adjacency matrix of the graph.
The first row is the degree of $\mathcal L^{-1}$.
The second and third rows are the ML and reciprocal ML degrees of $\mathcal L$ as defined by \cite{bernd-claudia}.
Since $\mathcal L$ is a pencil, the ideal $I(\mathcal L^{-1})$ is generated by linear and quadratic forms.
The last two rows give the number of these forms in a set of minimal generators of $I(\mathcal L^{-1})$.
}\label{tablealgprop}
\end{table}

Comparing the number of linearly independent linear forms induced by symmetries with the total number $\binom{n+1}{2}-r$, we can determine whether the linear part of $I(\mathcal L^{-1})$ is induced by symmetries for any given graph $G$. To this end, the following reformulation of Proposition~\ref{symmetries} is useful:

\begin{cor}\label{action}
Let $G$ be a coloured graph, $i,j$ two vertices, and $B\in \Aut(G)$ be a symmetry, considered as a permutation of the vertex set
$V$. Then $x_{i,j} - x_{B(i), B(j)}$ lies in $I(\mathcal L^{-1})$.
In particular,
let $V * V$ be the set of unordered pairs of vertices of $G$ and $s$ the number of orbits of the entrywise action of $\Aut(G)$ on $V * V$. Then
the number of linear forms of $I(\mathcal L^{-1})$ induced by symmetries is $\binom{n+1}{2} - s$.
\end{cor}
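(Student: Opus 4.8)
The plan is to deduce Corollary~\ref{action} directly from Proposition~\ref{symmetries}, treating it as little more than a bookkeeping exercise. First I would reinterpret the symmetry $B$ as a permutation of $V$: if $B$ is the permutation matrix sending $e_i \mapsto e_{B(i)}$, then conjugation $X \mapsto BXB^{-1}$ sends the $(i,j)$ entry $x_{ij}$ of $X$ to $x_{B^{-1}(i), B^{-1}(j)}$ (or $x_{B(i),B(j)}$, depending on the convention fixed for the permutation action; either way the entries of $BXB^{-1} - X$ are exactly the differences $x_{ij} - x_{B(i),B(j)}$ up to sign and relabelling). Since $X$ is symmetric these are binomial linear forms indexed by unordered pairs, so by Proposition~\ref{symmetries} each such $x_{ij} - x_{B(i),B(j)}$ lies in $I(\mathcal L^{-1})$. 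This gives the first assertion.

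For the counting statement, I would observe that the linear forms $x_{ij} - x_{B(i),B(j)}$, as $B$ ranges over $\Aut(G)$ and $\{i,j\}$ over $V * V$, generate a subspace of the linear forms whose associated equivalence relation on the coordinates $\{x_{ij}\}$ is precisely ``lie in the same orbit of the $\Aut(G)$-action on $V * V$.'' More carefully: two coordinates $x_{ij}$ and $x_{kl}$ are forced equal by these forms exactly when $\{k,l\} = \{B(i),B(j)\}$ for some $B \in \Aut(G)$, and transitivity of this relation follows from $\Aut(G)$ being a group. Hence the span of all linear forms induced by symmetries is the space of linear forms vanishing on the diagonal subspace where coordinates in a common orbit are identified; that space has dimension $\binom{n+1}{2} - s$, where $s$ is the number of orbits, since picking one representative per orbit gives a basis of the quotient. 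Therefore the number of linearly independent linear forms induced by symmetries is $\binom{n+1}{2} - s$.

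I do not anticipate a genuine obstacle here; the only point requiring a little care is the passage from ``the binomial forms from all symmetries'' to ``the orbit relation'': one must check that taking all pairwise differences $x_{ij} - x_{B(i),B(j)}$ over a single orbit (equivalently, over all group elements) yields a subspace of the expected dimension, i.e.\ that no extra independent relations appear. This is immediate because a set of binomial ``difference'' forms $x_a - x_b$ associated to a graph on the coordinate set has rank equal to the number of coordinates minus the number of connected components of that graph, and here the connected components are exactly the orbits. Assembling these observations gives the corollary.
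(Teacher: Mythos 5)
Your proposal is correct and follows essentially the same route as the paper: the first claim is the computation of the $(B(i),B(j))$-entry of $BXB^{-1}$ via Proposition~\ref{symmetries}, and the count $\binom{n+1}{2}-s$ comes from each orbit $Y$ of $\Aut(G)$ on $V*V$ contributing $|Y|-1$ independent binomial forms. Your extra care about the rank of the difference forms (coordinates minus connected components, which here are the orbits) is exactly the justification the paper leaves implicit.
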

\begin{proof}
The statements follow from Proposition~\ref{symmetries}. The first statement follows by computing the $(B(i),B(j))$-th entry of $BXB^{-1}$.
The second follows from the fact that
every orbit $Y$ of the group action gives $|Y|-1$ linear forms, while
the number of
elements in $V * V$
is $\binom{n+1}{2}$. 
\end{proof}

It follows that the linear part of $I(\mathcal L^{-1})$ is induced by symmetries if and only if $s = r$.
Our main result gives four families of uniform coloured graphs where this is the case, illustrated in Figure~\ref{families}. We use the following conventions: the complete bipartite graph $K_{m,m}$ is split by the parity of its vertices, and the hyperoctahedral graph $H_m$ is obtained from the complete graph $K_{2m}$ by removing the edges $\{(2k-1,2k)\mid k=1,\dotsc,m\}$.

\begin{figure}[ht]
\centering
\begin{tikzpicture}
\begin{scope}[scale=1]
\footnotesize
    \draw (0,0) node[draw,circle,inner sep=2pt,fill=blue, label={[xshift=-0.3cm,yshift=-0.5cm]5}](5){};
  \draw (0,1.5) node[draw,circle,inner sep=2pt,fill=blue, label={[xshift=-0.3cm,yshift=-0.2cm]1}](1){};
  \draw (0.9,0) node[draw,circle,inner sep=2pt,fill=blue, label ={[xshift=0.3cm, yshift=-0.5cm]4}](4){};
  \draw (0.9,1.5) node[draw,circle,inner sep=2pt,fill=blue, label ={[xshift=0.3cm,yshift=-0.2cm]2}](2){};
  \draw (-0.4,0.75) node[draw,circle,inner sep=2pt,fill=blue, label ={[xshift=-0.3cm, yshift=-0.3cm]6}](6){};
  \draw (1.3,0.75) node[draw,circle,inner sep=2pt,fill=blue, label ={[xshift=0.3cm,yshift=-0.3cm]3}](3){};
  \draw[-] (1) edge[red,thick] (2);
  \draw[-] (2) edge[red,thick] (3);
  \draw[-] (3) edge[red,thick] (4);
  \draw[-] (4) edge[red,thick] (5);
  \draw[-] (5) edge[red,thick] (6);
  \draw[-] (6) edge[red,thick] (1);
  \end{scope}
\end{tikzpicture}\hspace{1em}
\begin{tikzpicture}
\begin{scope}[scale=1]
\footnotesize
    \draw (0,0) node[draw,circle,inner sep=2pt,fill=blue, label={[xshift=-0.3cm,yshift=-0.5cm]5}](5){};
  \draw (0,1.5) node[draw,circle,inner sep=2pt,fill=blue, label={[xshift=-0.3cm,yshift=-0.2cm]1}](1){};
  \draw (0.9,0) node[draw,circle,inner sep=2pt,fill=blue, label ={[xshift=0.3cm, yshift=-0.5cm]4}](4){};
  \draw (0.9,1.5) node[draw,circle,inner sep=2pt,fill=blue, label ={[xshift=0.3cm,yshift=-0.2cm]2}](2){};
  \draw (-0.4,0.75) node[draw,circle,inner sep=2pt,fill=blue, label ={[xshift=-0.3cm, yshift=-0.3cm]6}](6){};
  \draw (1.3,0.75) node[draw,circle,inner sep=2pt,fill=blue, label ={[xshift=0.3cm,yshift=-0.3cm]3}](3){};
  \draw[-] (1) edge[red,thick] (2);
  \draw[-] (2) edge[red,thick] (3);
  \draw[-] (3) edge[red,thick] (4);
  \draw[-] (4) edge[red,thick] (5);
  \draw[-] (5) edge[red,thick] (6);
  \draw[-] (6) edge[red,thick] (1);
  \draw[-] (1) edge[red,thick] (3);
  \draw[-] (2) edge[red,thick] (4);
  \draw[-] (3) edge[red,thick] (5);
  \draw[-] (4) edge[red,thick] (6);
  \draw[-] (5) edge[red,thick] (1);
  \draw[-] (6) edge[red,thick] (2);
  \draw[-] (1) edge[red,thick] (4);
  \draw[-] (2) edge[red,thick] (5);
  \draw[-] (3) edge[red,thick] (6);
  \end{scope}
\end{tikzpicture}\hspace{1em}
\begin{tikzpicture}
\begin{scope}[scale=1]
\footnotesize
    \draw (0,0) node[draw,circle,inner sep=2pt,fill=blue, label={[xshift=-0.3cm,yshift=-0.5cm]5}](5){};
  \draw (0,1.5) node[draw,circle,inner sep=2pt,fill=blue, label={[xshift=-0.3cm,yshift=-0.2cm]1}](1){};
  \draw (0.9,0) node[draw,circle,inner sep=2pt,fill=blue, label ={[xshift=0.3cm, yshift=-0.5cm]4}](4){};
  \draw (0.9,1.5) node[draw,circle,inner sep=2pt,fill=blue, label ={[xshift=0.3cm,yshift=-0.2cm]2}](2){};
  \draw (-0.4,0.75) node[draw,circle,inner sep=2pt,fill=blue, label ={[xshift=-0.3cm, yshift=-0.3cm]6}](6){};
  \draw (1.3,0.75) node[draw,circle,inner sep=2pt,fill=blue, label ={[xshift=0.3cm,yshift=-0.3cm]3}](3){};
  \draw[-] (3) edge[red,thick] (4);
  \draw[-] (6) edge[red,thick] (1);
  \draw[-] (5) edge[red,thick] (4);
  \draw[-] (3) edge[red,thick] (2);
  \draw[-] (2) edge[red,thick] (1);
  \draw[-] (6) edge[red,thick] (5);
  \draw[-] (1) edge[red,thick] (4);
  \draw[-] (2) edge[red,thick] (5);
  \draw[-] (3) edge[red,thick] (6);
  \end{scope}
\end{tikzpicture}\hspace{1em}
\begin{tikzpicture}
\begin{scope}[scale=1]
\footnotesize
    \draw (0,0) node[draw,circle,inner sep=2pt,fill=blue, label={[xshift=-0.3cm,yshift=-0.5cm]5}](5){};
  \draw (0,1.5) node[draw,circle,inner sep=2pt,fill=blue, label={[xshift=-0.3cm,yshift=-0.2cm]1}](1){};
  \draw (0.9,0) node[draw,circle,inner sep=2pt,fill=blue, label ={[xshift=0.3cm, yshift=-0.5cm]4}](4){};
  \draw (0.9,1.5) node[draw,circle,inner sep=2pt,fill=blue, label ={[xshift=0.3cm,yshift=-0.2cm]2}](2){};
  \draw (-0.4,0.75) node[draw,circle,inner sep=2pt,fill=blue, label ={[xshift=-0.3cm, yshift=-0.3cm]6}](6){};
  \draw (1.3,0.75) node[draw,circle,inner sep=2pt,fill=blue, label ={[xshift=0.3cm,yshift=-0.3cm]3}](3){};
  \draw[-] (2) edge[red,thick] (3);
  \draw[-] (4) edge[red,thick] (5);
  \draw[-] (6) edge[red,thick] (1);
  \draw[-] (1) edge[red,thick] (3);
  \draw[-] (2) edge[red,thick] (4);
  \draw[-] (3) edge[red,thick] (5);
  \draw[-] (4) edge[red,thick] (6);
  \draw[-] (5) edge[red,thick] (1);
  \draw[-] (6) edge[red,thick] (2);
  \draw[-] (1) edge[red,thick] (4);
  \draw[-] (2) edge[red,thick] (5);
  \draw[-] (3) edge[red,thick] (6);
  \end{scope}
\end{tikzpicture}
\caption{Uniform coloured graphs on $6$ vertices where the linear part of $I(\mathcal L^{-1})$ is induced by symmetries. From left to right: the cycle $C_6$, the complete graph $K_6$, the complete bipartite graph $K_{3,3}$, and the hyperoctahedral graph $H_3$.}\label{families}
\end{figure}
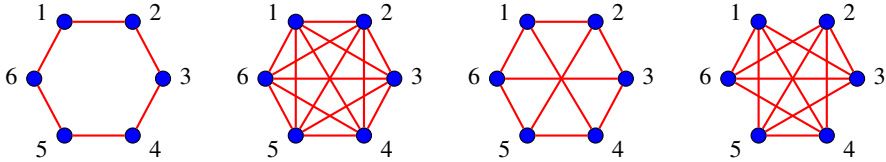

\begin{thm}\label{circulant-families}
Let $G$ be one of: the cycle $C_n$, the complete graph $K_n$, the complete bipartite graph $K_{m,m}$, or the hyperoctahedral graph $H_m$. Then the linear part $L$ of $I(\mathcal L^{-1})$ is induced by symmetries. 
More precisely,

1. If $G = C_n$, then $r=s=\lfloor n/2\rfloor + 1$ and $L$ is generated by
\[
    x_{1,1+d}-x_{i,i+d}, \quad i=2,\dotsc,n,\quad d=0,\dotsc,\lfloor n/2\rfloor,
\]
where all indices are taken modulo $n$ and $x_{ji}$ for $j>i$ is taken to mean $x_{ij}$.

2. If $G = K_n$, then $r = s = 2$ and $L$ is generated by
\begin{align*}
x_{11}&-x_{ii}, \quad i\in V\\
x_{12}&-x_{ij}, \quad i\in E.
\end{align*}

3. If $G = K_{m,m}$, then $r = s = 3$ and $L$ is generated by
\begin{align*}
x_{11}&-x_{ii}, \quad i\in V\\
x_{12}&-x_{ij}, \quad (i,j)\in E\\
x_{13}&-x_{ij}, \quad (i,j)\in E^c.
\end{align*}

4. If $G = H_m$, then $r = s = 3$ and $L$ is generated by
\begin{align*}
x_{11}&-x_{ii},\quad i\in V\\
x_{13}&-x_{ij},\quad (i,j)\in E\\
x_{12}&-x_{ij},\quad (i,j)\in E^c.
\end{align*}

\end{thm}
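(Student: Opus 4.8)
The plan is to verify, for each of the four families, two separate facts: first, that $r = s$ (an equality of two combinatorially defined numbers), and second, that the explicit list of linear forms given is in fact a generating set for the linear part $L$. Once $r = s$ is known, Corollary~\ref{action} tells us that the number of linearly independent linear forms induced by symmetries equals $\binom{n+1}{2} - s = \binom{n+1}{2} - r$, which by Table~\ref{tablealgprop} is exactly the dimension of $L$. So it suffices to exhibit, for each family, a set of $\binom{n+1}{2} - s$ linearly independent forms of the type $x_{i,j} - x_{B(i),B(j)}$, or equivalently a set of orbit representatives for the action of $\Aut(G)$ on the unordered pairs $V * V$; the displayed list is then just a concrete such set. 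In other words, the theorem reduces to: (a) compute the number $r$ of distinct eigenvalues of the adjacency matrix; (b) compute the number $s$ of orbits of $\Aut(G)$ on $V * V$; (c) check $r = s$; and (d) check that the displayed binomials indeed represent all orbits with more than one element.

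For step (a), the spectra are classical: $C_n$ has eigenvalues $2\cos(2\pi k/n)$ for $k = 0,\dots,n-1$, giving $r = \lfloor n/2\rfloor + 1$ distinct values; $K_n$ has eigenvalues $n-1$ (once) and $-1$ (with multiplicity $n-1$), so $r = 2$; $K_{m,m}$ has eigenvalues $\pm m$ (once each) and $0$ (multiplicity $2m-2$), so $r = 3$; and $H_m = K_{2m} - \text{perfect matching}$, the cocktail-party graph, has eigenvalues $2m-2$, $0$ (multiplicity $m-1$), and $-2$ (multiplicity $m$), so $r = 3$. For step (b), I would describe the automorphism groups explicitly: $\Aut(C_n)$ is the dihedral group $D_n$; $\Aut(K_n) = S_n$; $\Aut(K_{m,m}) = S_m \wr S_2$ (wreath product, swapping the two sides); and $\Aut(H_m) = S_m \wr S_2$ acting on the $m$ blocks $\{2k-1,2k\}$. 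Then I would enumerate the orbits on unordered pairs. For $K_n$: pairs split into $\{i,i\}$ (one orbit) and $\{i,j\}$ with $i \ne j$ (one orbit), so $s = 2 = r$. For $K_{m,m}$: diagonal pairs form one orbit; off-diagonal pairs split by whether the two vertices lie on the same side or opposite sides, giving two more orbits, so $s = 3 = r$; the forms $x_{12} - x_{ij}$ for edges (opposite sides) and $x_{13} - x_{ij}$ for non-edges (same side, $i \ne j$) match, once one checks vertices $1,3$ are on the same side and $1,2$ on opposite sides under the split by parity. For $H_m$: diagonal pairs form one orbit; off-diagonal pairs split by whether $\{i,j\}$ is one of the removed matching edges (a non-edge of $H_m$) or not (an edge of $H_m$), giving $s = 3 = r$; here $x_{13}$ corresponds to an edge and $x_{12}$ to a matching pair (non-edge), which matches the statement. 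For $C_n$: a pair $\{i,j\}$ is determined up to the $D_n$-action by the circular distance $d = \min(|i-j|, n - |i-j|) \in \{0,1,\dots,\lfloor n/2\rfloor\}$, and each such distance is a single orbit, so $s = \lfloor n/2\rfloor + 1 = r$; the forms $x_{1,1+d} - x_{i,i+d}$ for $i = 2,\dots,n$ are exactly the "translate the representative around the cycle" relations within each of these orbits.

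The remaining point is to confirm that the displayed binomials are not just correct representatives of the orbit relations but actually span $L$ and are linearly independent — but this is automatic: within each orbit $Y$, fixing one representative $p_0$ and taking $x_{p_0} - x_p$ for the other $|Y| - 1$ elements $p$ gives $|Y| - 1$ linearly independent forms (they involve distinct variables $x_p$), and forms from different orbits involve disjoint variable sets, so altogether we get $\sum_Y (|Y| - 1) = \binom{n+1}{2} - s$ linearly independent forms, all lying in $L$ by Corollary~\ref{action}. Since $\dim L = \binom{n+1}{2} - r = \binom{n+1}{2} - s$, these must be a basis of $L$. The main obstacle — really the only place where genuine care is needed — is the bookkeeping in step (b) for $K_{m,m}$ and $H_m$: one must match the abstract orbit description ("same side vs.\ opposite sides", "matching pair vs.\ not") correctly to the concrete labels $1,2,3$ and to the edge set $E$ versus its complement $E^c$, under the stated conventions (parity split for $K_{m,m}$, and $H_m = K_{2m}$ minus the edges $\{(2k-1,2k)\}$). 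It is also worth double-checking the small-case edge effects: for $C_n$ with $n$ even the extreme distance $d = n/2$ behaves slightly differently (the pair $\{i, i+n/2\}$ has a smaller stabiliser), and for $C_3, C_4$ the cycle coincides with $K_3$, $K_4$ minus a matching, so one should make sure the formulas are consistent there, but none of this affects the orbit count.
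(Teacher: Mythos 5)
Your proposal is correct and follows essentially the same route as the paper: cite the classical spectra to get $r$, use Corollary~\ref{action} together with the count $\binom{n+1}{2}-r$ of linear forms from Table~\ref{tablealgprop} to reduce everything to showing $r=s$, and then enumerate the orbits of $\Aut(G)$ on $V*V$ by splitting into $V$, $E$, and $E^c$ for each family. The only differences are cosmetic — you compute the spectra and automorphism groups explicitly where the paper cites Biggs, and you spell out the linear-independence of the orbit representatives, which the paper leaves implicit in Corollary~\ref{action}.
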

\begin{proof}
First, all statements about the number of eigenvalues $r$ are proven in~\cite[pp.\ 17, 50]{biggs}.
By Corollary~\ref{action}, giving the linear forms induced by symmetries amounts to giving a full set of representatives for the action of $\Aut(G)$ on
$V*V$.
Since this action decomposes into an action on the three disjoint subsets $V,E$, and $E^c$, it is useful to consider these separately.

1. If $G = C_n$, then $\Aut(G)$ acts transitively on $V$ and $E$. The orbits of the action on $E^c$ are the sets $E^c_d \coloneqq \{(i,i+d) \mid i\in V\}$ for $d = 2,\dotsc, \lfloor n/2\rfloor$.

2. If $G = K_n$, then $\Aut(G)$ acts transitively on $V$ and $E$, and $E^c = \emptyset$.

3. If $G = K_{m,n}$ is any complete bipartite graph, then the complement $G^c$ is the disjoint union of the two complete graphs $K_m = (V_1,E_1^c)$ and $K_n = (V_2, E_2^c)$. The group $\Aut(G)$ acts transitively on $V_1, V_2, E, E^c_1$, and $E^c_2$. In the case $n=m$, there exists an additional symmetry that interchanges $V_1$ and $V_2$. Hence in this case, we see that $\Aut(G)$ acts transitively on $V,E$, and $E^c$.

4. The hyperoctahedral graph $G = H_m$ is the complete multipartite graph on $m$ two-element sets. A similar argument to the one for $K_{m,m}$ shows that $\Aut(G)$ acts transitively on $V,E,$ and $E^c$.

Taking these statements about the action of $\Aut(G)$ on
$V*V$
and applying Corollary~\ref{action} proves all statements in the theorem.
\end{proof}

\begin{exa}
The linear space associated to the uniform coloured $6$-cycle is
\begin{align*}
\mathcal{L}= \left\{\begin{pmatrix}
\lambda_2 & \lambda_1 &0&0&0& \lambda_1\\
\lambda_1 & \lambda_2 & \lambda_1 &0&0&0\\
0&\lambda_1 & \lambda_2 & \lambda_1 &0&0\\
0&0& \lambda_1 & \lambda_2 & \lambda_1 &0\\
0&0&0& \lambda_1 & \lambda_2 & \lambda_1\\
\lambda_1 &0&0&0& \lambda_1 & \lambda_2\\
\end{pmatrix} : \lambda_1,\lambda_2\in \mathbb{C}  \right\}. 
\end{align*}
The spectrum of the (uncoloured) $6$-cycle is $(1_2,-1_2,2,-2)$. There are $4$ orbits of the action of $\Aut(G)$ on
$V*V$,
namely $V,E,E_2^c$ and $E_3^c$.
According to Theorem~\ref{circulant-families}, the following $17$ linear forms generate the linear part of $I(\mathcal L^{-1})$:
\begin{center}
\begin{tabular}{CCCC}
x_{11} - x_{22} & x_{12} - x_{23} & x_{13} - x_{24} & x_{14} - x_{25}\\
x_{11} - x_{33} & x_{12} - x_{34} & x_{13} - x_{35} & x_{14} - x_{36}\\
x_{11} - x_{44} & x_{12} - x_{45} & x_{13} - x_{46} & \\
x_{11} - x_{55} & x_{12} - x_{56} & x_{13} - x_{15} & \\
x_{11} - x_{66} & x_{12} - x_{16} & x_{13} - x_{26} &
\end{tabular}
\end{center}
The graph $G'$ associated to the symmetries of $G$ can be found in Table~\ref{table4}.
\end{exa}

We conclude this section with two families of graphs where the linear part of $I(\mathcal L^{-1})$ is not induced by symmetries, illustrated in Figure~\ref{noncirculantfamilies}. We use the convention that for $m<n$, the vertex set of the complete bipartite graph $K_{m,n}$ is partitioned as $\{1,\dotsc, m\}$ and $\{m+1,\dotsc,m+n\}$.

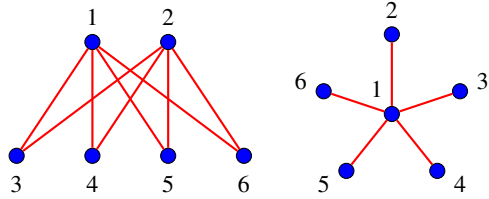
\begin{figure}[H]
\centering
\begin{tikzpicture}
\begin{scope}[scale=1]
\footnotesize
    \draw (1,0) node[draw,circle,inner sep=2pt,fill=blue, label={[xshift=0cm,yshift=-0.7cm]5}](5){};
  \draw (0,1.5) node[draw,circle,inner sep=2pt,fill=blue, label={[xshift=0cm,yshift=0cm]1}](1){};
  \draw (0,0) node[draw,circle,inner sep=2pt,fill=blue, label ={[xshift=0cm, yshift=-0.7cm]4}](4){};
  \draw (1,1.5) node[draw,circle,inner sep=2pt,fill=blue, label ={[xshift=0cm,yshift=0cm]2}](2){};
  \draw (2,0) node[draw,circle,inner sep=2pt,fill=blue, label ={[xshift=0cm, yshift=-0.7cm]6}](6){};
  \draw (-1,0) node[draw,circle,inner sep=2pt,fill=blue, label ={[xshift=0cm,yshift=-0.7cm]3}](3){};
  \draw[-] (1) edge[red,thick] (3);
  \draw[-] (2) edge[red,thick] (3);
  \draw[-] (1) edge[red,thick] (4);
  \draw[-] (2) edge[red,thick] (4);
  \draw[-] (1) edge[red,thick] (5);
  \draw[-] (2) edge[red,thick] (5);
  \draw[-] (1) edge[red,thick] (6);
  \draw[-] (2) edge[red,thick] (6);
  \end{scope}
\end{tikzpicture}\hspace{1em}
\begin{tikzpicture}
\begin{scope}[scale=1.5]
\footnotesize
    \draw (0.2,0) node[draw,circle,inner sep=2pt,fill=blue, label={[xshift=-0.3cm,yshift=-0.5cm]5}](5){};
  \draw (0,0.7) node[draw,circle,inner sep=2pt,fill=blue, label={[xshift=-0.3cm,yshift=-0.2cm]6}](6){};
  \draw (1,0) node[draw,circle,inner sep=2pt,fill=blue, label ={[xshift=0.3cm, yshift=-0.5cm]4}](4){};
  \draw (1.2,0.7) node[draw,circle,inner sep=2pt,fill=blue, label ={[xshift=0.3cm,yshift=-0.2cm]3}](3){};
  \draw (0.6,1.2) node[draw,circle,inner sep=2pt,fill=blue,label={2}](2){};
  \draw (0.6,0.5) node[draw,circle,inner sep=2pt,fill=blue,label={[xshift=-0.2cm, yshift=0cm]1}](1){};
  \draw[-] (1) edge[red,thick] (2);
  \draw[-] (1) edge[red,thick] (3);
  \draw[-] (1) edge[red,thick] (4);
  \draw[-] (1) edge[red,thick] (5);
  \draw[-] (1) edge[red,thick] (6);
  \end{scope}
 \end{tikzpicture}\\
\caption{
The uniform coloured complete bipartite graph $K_{2,4}$
and the star $K_{1,5}$. For these graphs, the linear part of $I(\mathcal L^{-1})$ is not induced by symmetries.
}\label{noncirculantfamilies}
\end{figure}

\begin{prop}\label{complete-bipartite}
Let $G=K_{m,n}$ be the complete bipartite graph with $1<m<n$. Then $r=3$, $s=5$ and the linear part of $I(\mathcal L^{-1})$ is generated by
\begin{align*}
x_{11}-x_{ii},&\quad i\in V_1\\
x_{m+1,m+1}-x_{ii},&\quad i\in V_2\\
x_{12}-x_{ij},&\quad (i,j)\in E^c_1\\
x_{m+1,m+2}-x_{ij},&\quad (i,j)\in E^c_2\\
x_{1,m+1}-x_{ij},&\quad (i,j)\in E\\
m x_{12} - n x_{m+n-1,m+n},& \\
mx_{11}-(n-m) x_{m+n-1,m+n} - mx_{m+n,m+n},&
\end{align*}
where $(V_1,E_1^c)$ and $(V_2,E_2^c)$ are the two 
components of the complement $G^c$.
\end{prop}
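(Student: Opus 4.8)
The plan is to combine a direct computation of the generic inverse of $\lambda_1 A_1 + \lambda_2 A_2$ with the orbit count from Corollary~\ref{action} and the dimension formula from Table~\ref{tablealgprop}.

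First I would record the spectral and symmetry data. The adjacency matrix $A_2$ of $K_{m,n}$ has eigenvalues $\sqrt{mn}$ and $-\sqrt{mn}$, each simple, together with $0$ of multiplicity $m+n-2$; hence $r = 3$, as in~\cite{biggs}. Since $m < n$, the two parts of the bipartition are distinguished by their sizes, so $\Aut(K_{m,n}) = S_m \times S_n$. I would then check that the entrywise action of this group on $V*V$ has exactly five orbits: the two diagonal orbits $\{(i,i) : i \in V_1\}$ and $\{(i,i) : i \in V_2\}$, the orbit $E$ of cross pairs, and the orbits $E_1^c$ and $E_2^c$ of within-part pairs. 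Thus $s = 5$, and by Corollary~\ref{action} the forms induced by symmetries are exactly the first five displayed families; they are linearly independent (each telescopes from a single representative), and there are $\binom{m+n+1}{2} - 5$ of them.

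The core step is the generic inverse. Writing $M = \lambda_1 A_1 + \lambda_2 A_2$ in block form with diagonal blocks $\lambda_1 I_m$, $\lambda_1 I_n$ and off-diagonal blocks $\lambda_2$ times an all-ones matrix, I would apply block inversion together with the Sherman--Morrison formula, exploiting that the all-ones blocks have rank one. This yields $M^{-1}$ explicitly as a symmetric matrix whose entries depend only on which orbit the index pair lies in: a value $a_1$ on the $V_1$-diagonal, $a_2$ on the $V_2$-diagonal, $b_1$ on $E_1^c$, $b_2$ on $E_2^c$, and $c$ on $E$, each a rational function of $\lambda_1,\lambda_2$ with denominator a power of $\lambda_1(\lambda_1^2 - mn\lambda_2^2)$. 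Reading off these formulas one sees the identities $m b_1 = n b_2$ and $m a_1 = (n-m)b_2 + m a_2$; a convenient way to verify them is to substitute $b_1 = nv$, $b_2 = mv$, $a_1 = u + nv$, $a_2 = u + mv$ for the appropriate $u,v$ coming from the computation. These two identities translate into the last two displayed linear forms, which therefore lie in $I(\mathcal L^{-1})$.

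Finally I would assemble the count. Modulo the first five families every linear form reduces to a combination of the five representative coordinates $x_{11}, x_{m+1,m+1}, x_{12}, x_{m+n-1,m+n}, x_{1,m+1}$ (note that $x_{m+1,m+2}$ and $x_{m+n-1,m+n}$ are congruent modulo those families, so either may serve as the $E_2^c$ representative), and the two new forms span a two-dimensional subspace of this five-dimensional quotient. Hence the listed forms constitute $\binom{m+n+1}{2} - 5 + 2 = \binom{m+n+1}{2} - 3$ linearly independent elements of $I(\mathcal L^{-1})$; by Table~\ref{tablealgprop}, applied with $m+n$ vertices and $r = 3$, this equals the dimension of the linear part, so they generate it. I expect the generic-inverse computation to be the main obstacle: the block and Sherman--Morrison bookkeeping must be carried out carefully, and one then has to isolate exactly which linear combinations of $a_1,a_2,b_1,b_2,c$ vanish identically in $\lambda_1,\lambda_2$; the remaining steps are the orbit count and the dimension match.
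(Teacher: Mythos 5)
Your proposal is correct and follows the same architecture as the paper's proof: the five symmetry-induced families come from the orbit count of $\Aut(K_{m,n})=S_m\times S_n$ acting on $V*V$ (so $s=5$), the two extra forms come from an explicit computation of the generic inverse $A^{-1}$, and the count of $\binom{m+n+1}{2}-3$ independent forms against Table~\ref{tablealgprop} closes the argument. The only difference is computational: the paper obtains the four relevant entries of $A^{-1}$ as cofactors expressed through the determinants $D_{m-1,n}$, $D_{m-2,n}$, $D_{m,n-1}$, $D_{m,n-2}$ together with the closed formula $D_{m,n}=\lambda_1^{m+n}-mn\lambda_1^{m+n-2}\lambda_2^2$, whereas you invert the block matrix via the Schur complement and Sherman--Morrison; both routes yield the same identities $mb_1=nb_2$ and $ma_1=(n-m)b_2+ma_2$.
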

\begin{proof}
The statement about $r$ is proven in~\cite[pg.\ 50]{biggs}. The first five sets of linear forms are the forms induced by symmetries, as explained in the proof of Theorem~\ref{circulant-families}. Furthermore, all the above linear forms are linearly independent.

To see that the last two linear forms vanish on $\mathcal L^{-1}$, let $D_{m,n}$ be the determinant of the coloured adjacency matrix $A$ of $K_{m,n}$. By manually calculating the entries of $A^{-1}$ (up to a factor of $\det(A)^{-1}$) we have
\begin{align*}
x_{11} &= D_{m-1,n}, \\
x_{12} &= -(D_{m-1,n} - \lambda_1 D_{m-2,n}), \\
x_{m+n-1,m+n} &= -(D_{m,n-1} - \lambda_1 D_{m,n-2}), \\
x_{n+m,n+m} &= D_{m,n-1}.
\end{align*}
Furthermore, one can verify that $D_{m,n} = \lambda_1^{m+n} - mn\lambda_1^{m+n-2}\lambda_2^2.$ Substituting this formula for $D_{m,n}$ in the above expressions for the entries of $A^{-1}$ shows that the last two linear forms above indeed vanish. The assumption that $m>1$ was used in the middle two equations for the entries of $A^{-1}$.
\end{proof}

\begin{prop}\label{star}
Let $G$ be the \emph{star graph} on $n$ vertices, i.e.\ the complete bipartite graph $K_{1,n-1}$. Then $r = 3$, $s = 4$, and the linear part of $I(\mathcal L^{-1})$ is generated by
\begin{align*}
x_{22} - x_{ii},& \quad i = 3,\dotsc,n \\
x_{12} - x_{ij},& \quad (i,j) \in E\\
x_{23} - x_{ij},& \quad (i,j) \in E^c\\
x_{11} - (n-2)x_{n-1,n} - x_{nn}.&
\end{align*}
\end{prop}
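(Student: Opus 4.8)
The plan is to mirror the proof of Proposition~\ref{complete-bipartite}: first isolate the linear forms coming from symmetries via Corollary~\ref{action}, then produce the one remaining generator by an explicit computation with the inverse adjacency matrix, and finally finish by a dimension count against Table~\ref{tablealgprop}. For the symmetry step, observe that every automorphism of the star fixes the centre $1$ (it is the unique vertex of degree $>1$) and permutes the leaves $2,\dotsc,n$ arbitrarily, so $\Aut(G)=S_{n-1}$. Its orbits on $V*V$ are the singleton $\{(1,1)\}$, the set of leaf self-pairs $\{(i,i):i=2,\dotsc,n\}$, the edge set $E$, and the non-edge set $E^c$; hence $s=4$. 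Choosing the representatives $(2,2)$, $(1,2)$ and $(2,3)$, Corollary~\ref{action} shows that the forms induced by symmetries span the same subspace as the first three families in the statement, while the singleton orbit $\{(1,1)\}$ contributes nothing — which is exactly why $x_{11}$ is not yet constrained and a further generator is needed.

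For that extra generator, I would compute $A^{-1}$ directly, using that $A=\lambda_1 A_1+\lambda_2 A_2$ is an arrowhead matrix: in $1+(n-1)$ block form it has $\lambda_1$ in the top-left corner, $\lambda_2$ times the all-ones vector in the border, and $\lambda_1 I_{n-1}$ in the bottom-right block. The Schur complement of the bottom-right block is the scalar $\lambda_1-(n-1)\lambda_2^2/\lambda_1$, and the block-inverse formula produces exactly four kinds of entries, one per orbit above: the centre entry $(A^{-1})_{11}$, the edge entries $(A^{-1})_{1j}$, the leaf-diagonal entries $(A^{-1})_{jj}$, and the non-edge entries $(A^{-1})_{jk}$ for distinct leaves $j,k$. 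Comparing these expressions gives the identity
\[
(A^{-1})_{11}=(n-2)\,(A^{-1})_{jk}+(A^{-1})_{jj}
\]
for any distinct leaves $j,k$; taking $(j,k)=(n-1,n)$ shows that $x_{11}-(n-2)x_{n-1,n}-x_{nn}$ vanishes on $\mathcal L^{-1}$. (Equivalently, one may clear denominators and check the identity on the adjugate $\det(A)\,A^{-1}$, using $\det A=\lambda_1^{n-2}(\lambda_1^2-(n-1)\lambda_2^2)$, which is the $m=1$ specialisation of the determinant $D_{m,n}$ from the proof of Proposition~\ref{complete-bipartite}; note that proposition does not apply directly here, as its hypothesis is $1<m<n$.)

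It remains to count. The three symmetry families are built from difference forms attached to three distinct orbits and are supported, respectively, on the leaf-diagonal variables, the edge variables and the non-edge variables, so they are linearly independent; the last form is the only one involving $x_{11}$, hence is independent of the first three families. Altogether this yields $(n-2)+(n-2)+\bigl(\tbinom{n-1}{2}-1\bigr)+1=\tbinom{n+1}{2}-3$ linearly independent linear forms in $I(\mathcal L^{-1})$. Since $r=3$ by~\cite[pg.\ 50]{biggs} and, by Table~\ref{tablealgprop}, a minimal generating set of $I(\mathcal L^{-1})$ contains exactly $\tbinom{n+1}{2}-r=\tbinom{n+1}{2}-3$ linear forms, our list must span the whole linear part. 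The one genuinely computational point — and where a slip in a sign or in the coefficient $n-2$ would most easily creep in — is the arrowhead inversion of the second paragraph; I would sanity-check it on $n=3$, where $K_{1,2}$ is the path $P_3$ and the extra form reads $x_{11}-x_{23}-x_{33}$.
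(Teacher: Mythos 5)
Your proposal is correct and follows essentially the same route as the paper: the orbit analysis of $\Aut(G)=S_{n-1}$ on $V*V$ gives the first three families, and the extra generator is verified by computing the entries $(A^{-1})_{11}$, $(A^{-1})_{jj}$, $(A^{-1})_{jk}$ (the paper records exactly these, up to $\det(A)$, as $\lambda_1^{n-1}$, $\lambda_1^{n-1}-(n-2)\lambda_1^{n-3}\lambda_2^2$, $\lambda_1^{n-3}\lambda_2^2$, using the same determinant formula $D_n=\lambda_1^n-(n-1)\lambda_1^{n-2}\lambda_2^2$). Your explicit independence and dimension count against Table~\ref{tablealgprop} only makes precise what the paper leaves implicit in the phrase ``the missing linear form.''
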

\begin{proof}
The action of $\Aut(G)$ on $V$ has two orbits, one with only one element, so that it does not give any linear form. The action on $E$ and $E^c$ is transitive. This explains the first three sets of linear forms, which are induced by symmetries.

The missing linear form can be derived by computing three entries of the inverse $A^{-1}$. Up to a factor of $\det(A)^{-1}$, we have
\begin{align*}
x_{11} &= \lambda_1^{n-1}, \\
x_{nn} &= \lambda_1^{n-1} - (n-2)\lambda_1^{n-3}\lambda_2^2, \\
x_{n-1,n} &= \lambda_1^{n-3}\lambda_2^2.
\end{align*}
The middle equality holds since the determinant $D_n$ of the coloured adjacency matrix of the star on $n$ vertices satisfies $D_n = \lambda_1^n - (n-1)\lambda_1^{n-2}\lambda_2^2$.
\end{proof}

\section{Open Problems}
By Proposition~\ref{symmetries}, the symmetries of a coloured graph explain some binomial linear forms in the ideal $I(\mathcal{L}^{-1})$.
The following example shows that in general, not all binomial linear forms in $I(\mathcal L^{-1})$ are induced by symmetries.
\begin{exa}\label{example2}
The graph
\begin{center}
\begin{tikzpicture}
\begin{scope}[scale=1.5]
\draw (0,0) node[draw,circle,inner sep=2pt,fill=BurntOrange, label={[xshift=-0.3cm,yshift=-0.5cm]4}](4){};
\draw (0,1) node[draw,circle,inner sep=2pt,fill=blue, label={[xshift=-0.3cm,yshift=-0.2cm]1}](1){};
\draw (1,0) node[draw,circle,inner sep=2pt,fill=blue, label ={[xshift=0.3cm, yshift=-0.5cm]3}](3){};
\draw (1,1) node[draw,circle,inner sep=2pt,fill=blue, label ={[xshift=0.3cm,yshift=-0.2cm]2}](2){};
\draw[-] (2) edge[red,thick] (3);
\draw[-] (3) edge[red,thick] (4);
\draw[-] (1) edge[red,thick] (4);
\end{scope}
\end{tikzpicture}
\end{center}
has no nontrivial symmetries, but all elements $X\in \mathcal L^{-1}$ satisfy $x_{13} - x_{24} = 0$.
\end{exa}
However, after computing the linear part of $I(\mathcal L^{-1})$ for all colourings on the 3,4, and 5-cycle where all vertices have the same colour, we arrived at the following conjecture.
\begin{conj}
Let $G$ be a coloured $n$-cycle where all vertices have the same colour. All \emph{binomial} linear forms in $I(\mathcal{L}^{-1})$ are induced by symmetries.
\end{conj}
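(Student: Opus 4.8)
The plan is to reduce the conjecture to a statement about when two cofactors of the coloured adjacency matrix $A$ of the cycle are proportional as polynomials, and then to show that for $C_n$ such a proportionality can only arise from a dihedral symmetry of the colouring.

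\textbf{Step 1 (reformulation).} Let $C_{ij}$ denote the $(i,j)$ cofactor of $A$, so that each entry of $A^{-1}$ equals $C_{ij}/\det A$. A \emph{binomial} linear form $\alpha x_{ij}+\beta x_{kl}$ with $\alpha\beta\ne 0$ lies in $I(\mathcal L^{-1})$ if and only if $\alpha C_{ij}+\beta C_{kl}$ vanishes identically in the colour variables, i.e.\ $C_{ij}$ and $C_{kl}$ are scalar multiples of one another. By Corollary~\ref{action}, the form is induced by symmetries exactly when some $B\in\Aut(G)$ sends $\{i,j\}$ to $\{k,l\}$. So the conjecture is equivalent to: for every colouring of $C_n$, if $C_{ij}\propto C_{kl}$ then $\{i,j\}$ and $\{k,l\}$ lie in the same $\Aut(G)$-orbit, and the proportionality constant is then $1$.

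\textbf{Step 2 (explicit cofactors; the mixed case).} Since $A$ is tridiagonal with two corner entries, every cofactor has a matching/arc expansion in the colour variables $a_v$ (the variable of vertex $v$'s colour) and $b_e$ (the variable of edge $e$'s colour). The diagonal cofactor $C_{ii}$ is the determinant $\theta(P_i)$ of the coloured path $P_i=G-i$, namely $\sum_M(-1)^{|M|}\prod_{e\in M}b_e^{2}\prod_{w\notin V(M)}a_w$ over matchings $M$ of $P_i$. For $i\ne j$ the cofactor $C_{ij}$ is, up to sign, a sum of two terms, one per arc of $C_n$ between $i$ and $j$: the term of an arc is the product of the edge variables along that arc, times $\theta$ of the coloured path on the vertices lying strictly inside the complementary arc. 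In particular a diagonal cofactor contains the monomial $\prod_{w\ne i}a_w$, with no edge variables, whereas every monomial of an off-diagonal cofactor involves at least one edge variable; so a diagonal cofactor is never proportional to an off-diagonal one, and comparing suitable extremal coefficients forces the proportionality constant to be $1$. This settles the mixed case and reduces the conjecture to: (i) $\theta(P_i)=\theta(P_j)$ implies a symmetry of $C_n$ sending $i$ to $j$; (ii) $C_{ij}=C_{kl}$ implies a symmetry sending $\{i,j\}$ to $\{k,l\}$.

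\textbf{Step 3 (the reconstruction lemma, which I expect to be the main obstacle).} Both (i) and (ii) would follow from a reconstruction lemma: the matching generating function $\theta$ of a coloured path determines that path up to reversal, and the two-arc polynomial $C_{ij}$ determines the cyclic colour word of $C_n$ together with the unordered pair $\{i,j\}$ up to rotation and reflection. Granting this, an isomorphism $P_i\cong P_j$ of coloured paths is orientation-preserving or orientation-reversing, hence the restriction to $V\setminus\{i\}$ of a rotation, respectively a reflection, $\phi$ of $C_n$; the top-degree term of $\theta$ forces $i$ and $j$ to have the same colour, so $\phi$ preserves all vertex colours, and it remains to match the colours of the one or two edges incident to $i$. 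When $j$ is not adjacent to $i$ this is automatic because $P_j$ already contains $i$ and both its edges, so their colours are constrained by $\theta(P_i)=\theta(P_j)$; when $j$ is adjacent to $i$ the edge $\{i,j\}$ lies in neither path, but then the candidate reflection $\phi$ fixes $\{i,j\}$, so no constraint on it is needed, and (ii) is handled analogously. The real difficulty is the reconstruction lemma itself: one must rule out accidental polynomial identities for coarse colourings, where monomials collapse, and, when $n$ is even, partial cancellation between the two arc-terms of $C_{ij}$, which then live in the same graded piece. I would attack it by induction on the number of vertices, peeling off an endpoint through the continuant recursion $\theta(P)=a_v\,\theta(P-v)-b_e^{2}\,\theta(P-v-v')$ and recovering the colour data of the end from the leading coefficients of $\theta$ — the vertex-colour multiset from the top-degree term, the multiset of edges, each recorded with its colour and the colours of its two endpoints, from the next — and, in the cyclic case, by first isolating the two arc-edge-monomials as the parts of lowest $a$-degree. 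The verified cases $n\le 5$ and Examples~\ref{five-cycle} and~\ref{example2} are consistent with this, but a clean uniform proof of the lemma is the crux.
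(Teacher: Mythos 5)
This statement is a conjecture in the paper's Open Problems section: the paper gives no proof, only computational verification for all colourings of the $3$-, $4$- and $5$-cycle. So there is no argument of ours to compare yours against; the question is whether your proposal actually closes the problem. It does not. Your Steps 1 and 2 are a sound and natural reduction: translating membership of $\alpha x_{ij}+\beta x_{kl}$ in $I(\mathcal L^{-1})$ into proportionality of the cofactors $C_{ij}$ and $C_{kl}$ is correct (a linear form vanishes on the dense image $\{A^{-1}\}$ iff the corresponding cofactor combination vanishes identically), the arc/matching expansion of the cofactors of a cyclic tridiagonal matrix is standard, and the observation that diagonal cofactors contain a pure vertex-variable monomial while off-diagonal ones do not correctly disposes of the mixed case. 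But the entire difficulty of the conjecture is then concentrated in your Step 3 ``reconstruction lemma,'' which you leave unproven and yourself identify as the crux. A reduction of an open conjecture to another unproven statement is not a proof.

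Two concrete reasons why Step 3 cannot be waved through. First, Example~\ref{example2} shows that for a coloured path, proportional off-diagonal cofactors can occur with no symmetry present, so any reconstruction statement must exploit the cyclic structure essentially and must survive exactly the degenerate regime you flag: coarse colourings where distinct matchings contribute the same monomial and, for even $n$, the two arc-terms of $C_{ij}$ sit in the same graded piece and can interfere. Your proposed induction via the continuant recursion recovers ``leading coefficients,'' but it is precisely when colour classes are large that those coefficients stop separating the data, and no mechanism is offered for that case. Second, even granting the lemma, the final step of upgrading an isomorphism $P_i\cong P_j$ to a symmetry of $C_n$ is not correct as written: the edges of $C_n$ incident to $i$ do not belong to $P_i$, so the isomorphism of coloured paths places no constraint on them, and the sentence claiming their colours are ``constrained by $\theta(P_i)=\theta(P_j)$'' does not establish that the candidate map $\phi$ respects the edge colouring at $i$. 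Likewise the claim that the proportionality constant is forced to equal $1$ (rather than $-1$) in the off-diagonal case needs an actual sign computation, since a relation $x_{ij}+x_{kl}\in I(\mathcal L^{-1})$ would not be of the form produced by Corollary~\ref{action}. In short: a promising and correctly set-up reduction, but the conjecture remains open after it.
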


In Theorem~\ref{circulant-families} we give four families of graphs where the linear part of $I(\mathcal L^{-1})$ is induced by symmetries. In particular, this gives a complete description of the linear part for the uniform coloured $n$-cycle. However, for every $n$ this only represents one possible colouring.

\begin{que}
Can one give a a complete description of the linear part of $I(\mathcal L^{-1})$ for \emph{all} colourings of the $n$-cycle?
\end{que}

Tables~\ref{table2} and \ref{table3} give a complete list of all coloured $5$-cycles with only one vertex colour. Just as in Example~\ref{five-cycle}, some linear forms are not induced by symmetries. We have highlighted these.

All graphs covered by Theorem~\ref{circulant-families} are \emph{circulant graphs}. By definition, these are graphs that have a symmetry which is a cyclic permutation of its vertices. We conjecture that like the families in the theorem, the linear part of $I(\mathcal L^{-1})$ is induced by symmetries for \emph{all} uniform coloured circulant graphs. This is equivalent to $r = s$ in the notation of Section~\ref{uniform-coloured-graphs}:

\begin{conj}
If $G$ is an uncoloured circulant graph, then the number of orbits of the action of $\Aut(G)$ on
the set $V*V$ of unordered pairs of vertices of $G$
coincides with the number of distinct eigenvalues of its adjacency matrix.
\end{conj}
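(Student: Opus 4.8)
The plan is to recast the equality $s=r$ as a statement about two algebras of matrices, and then reduce it, using the circulant structure, to a purely spectral (ultimately number-theoretic) question. Write $A$ for the adjacency matrix of $G$. On one side, $r=\dim \mathbb C[A]$, since the minimal polynomial of the symmetric matrix $A$ has degree equal to its number of distinct eigenvalues, so that $I,A,\dots,A^{r-1}$ is a basis of $\mathbb C[A]$. On the other side, $s=\dim W$, where $W\subseteq \mathbb S^n$ is the space of symmetric matrices fixed by conjugation by every element of $\Aut(G)$; indeed such a matrix is exactly one whose entries are constant on the orbits of $\Aut(G)$ on unordered pairs of vertices. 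Since every polynomial in $A$ is symmetric and commutes with the permutation matrices in $\Aut(G)$, we have $\mathbb C[A]\subseteq W$ with no work, hence $r\le s$; this is the inclusion underlying the count in Corollary~\ref{action}. The entire content of the conjecture is therefore the reverse inclusion $W\subseteq \mathbb C[A]$, equivalently $s\le r$.

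First I would exploit that $G$ is circulant. By definition $\Aut(G)$ contains the cyclic shift, so every $M\in W$ commutes with it and is a circulant matrix. Circulant matrices form a commutative algebra simultaneously diagonalised by the Fourier basis $v_j=(1,\omega^j,\dots,\omega^{(n-1)j})$ with $\omega=e^{2\pi i/n}$, on which $A$ acts with eigenvalue $\lambda_j=\sum_{s\in S}\omega^{js}$, where $S\subseteq \mathbb Z_n$ is the connection set. Thus both $\mathbb C[A]$ and $W$ are algebras of circulant matrices, and each is encoded by a partition of the frequency set $\mathbb Z_n$: the algebra $\mathbb C[A]$ corresponds to the partition $\Pi_A$ into level sets of $j\mapsto\lambda_j$ (so $|\Pi_A|=r$), while $W$ corresponds to the coarsest partition $\Pi_W$ such that the Fourier transform $\hat M$ is constant on its blocks for all $M\in W$ (so $|\Pi_W|=s$). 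Because $A\in W$, the partition $\Pi_W$ refines $\Pi_A$, recovering $r\le s$; the conjecture is precisely the assertion $\Pi_W=\Pi_A$, that is, that $A$ alone already separates any two frequencies that $W$ separates.

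To identify $\Pi_W$ I would use the multiplier group $M=\{a\in\mathbb Z_n^\times : aS=S\}$. Each multiplier $a$ yields the automorphism $i\mapsto ai$, and a direct computation gives $\hat M(aj)=\hat M(j)$ for every invariant circulant $M$ and $\lambda_{aj}=\lambda_j$ for the eigenvalues; hence the $M$-orbits on $\mathbb Z_n$ refine both $\Pi_W$ and $\Pi_A$. When $\Aut(G)$ consists only of affine maps, i.e.\ $\Aut(G)=\mathbb Z_n\rtimes M$, one checks that $\Pi_W$ is exactly the partition into $M$-orbits, so the conjecture reduces to the clean statement that $M$ acts transitively on each eigenvalue level set $\{j:\lambda_j=\mu\}$. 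Since the Galois group $\mathbb Z_n^\times=\operatorname{Gal}(\mathbb Q(\omega)/\mathbb Q)$ acts on eigenvalues by $\sigma_a(\lambda_j)=\lambda_{aj}$ and $M$ is exactly the stabiliser of the whole labelling $j\mapsto\lambda_j$, each level set is automatically a union of Galois-conjugate $M$-orbits; what must be excluded is an \emph{accidental} coincidence $\lambda_j=\lambda_{j'}$ between two distinct $M$-orbits.

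This last point is where I expect the real difficulty to lie, and it has two faces. The first is number-theoretic: $\lambda_j=\lambda_{j'}$ is a vanishing relation among roots of unity, $\sum_{s\in S}(\omega^{js}-\omega^{j's})=0$, and ruling out unforced coincidences means showing every such relation is already explained by a multiplier; I would attack this with the structure theory of vanishing sums of roots of unity (Conway--Jones, Mann, Lam--Leung) applied to the $\pm1$-combination coming from the indicator of $S$. The second face is that $\Aut(G)$ need not be affine: for certain connection sets there are non-affine automorphisms, which make $\Pi_W$ strictly coarser than the $M$-orbit partition, and one must show these extra automorphisms collapse frequencies in exactly the same way as the extra spectral coincidences they force. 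Here I would invoke the Schur-ring description of automorphism groups of circulant graphs (Klin--P\"oschel, Muzychuk) to reduce the general case to the affine one over the blocks of the associated S-ring, and then run the number-theoretic step blockwise. Proving that the S-ring generated by $A$ is always Schurian for our graphs, so that $\Pi_W$ is genuinely governed by $\Aut(G)$ rather than by a larger combinatorial symmetry, is likely the crux on which a complete proof hinges.
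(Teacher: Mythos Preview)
The statement you are attacking is listed in the paper as a \emph{conjecture} in the Open Problems section; the paper offers no proof of it, only the four special families in Theorem~\ref{circulant-families} as supporting evidence. There is therefore no argument in the paper to compare your proposal against.

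That said, your reduction is correct and well set up. The identifications $r=\dim\mathbb C[A]$ and $s=\dim W$, the inclusion $\mathbb C[A]\subseteq W$, and the Fourier-analytic translation into the two partitions $\Pi_A,\Pi_W$ of $\mathbb Z_n$ are all valid; in the affine case $\Aut(G)=\mathbb Z_n\rtimes M$ your claim that $\Pi_W$ coincides with the $M$-orbit partition is right (note that $-1\in M$ automatically since $S=-S$, so the symmetry constraint is already absorbed). But what you have written is a strategy, not a proof, and you say so yourself: the two obstacles you isolate---accidental coincidences $\lambda_j=\lambda_{j'}$ not witnessed by a multiplier, and the presence of non-affine automorphisms---are precisely the content of the conjecture, and neither the Conway--Jones/Lam--Leung classification of vanishing sums nor the Schur-ring machinery you cite delivers the needed conclusion without substantial additional work. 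In particular, showing that every relation $\sum_{s\in S}(\omega^{js}-\omega^{j's})=0$ with $S=-S$ forces $j'\in Mj$ is not a corollary of those results, and the Schurian question you flag in your last sentence is, as you suspect, the crux. Your outline is a reasonable research plan toward the conjecture, but it does not close the gap.
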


In this article we have focused on linear forms. We do not know whether the quadratic forms of a minimal generating set of $I(\mathcal L^{-1})$ can be deduced graphically from $G$. In Table~\ref{table4} we give these quadratic forms when $G$ is the uniform coloured $n$-cycle for $n=3,\dotsc,8$, along with the graph $G'$ associated to the symmetries of $G$.
 
\paragraph{Acknowledgements.} We thank Shiyue Li, Aida Maraj and Bernd Sturmfels for helpful conversations in the early stages of this project. Thanks to the anonymous referee for encouraging us to broaden the scope of our first version.

\begin{table}[htbp]
\begin{center} 
\begin{tabular}{|>{\centering\arraybackslash}m{3cm}|>{\centering\arraybackslash}m{5cm}|>{\centering\arraybackslash}m{3cm}|} \hline $G$ & Linear Forms & $G'$\\
 \hline \begin{tikzpicture}
\begin{scope}
\footnotesize
  \draw (0.2,0) node[draw,circle,inner sep=2pt,fill=blue, label={[xshift=-0.3cm,yshift=-0.5cm]5}](5){};
  \draw (0,0.7) node[draw,circle,inner sep=2pt,fill=blue, label={[xshift=-0.3cm,yshift=-0.2cm]1}](1){};
  \draw (1,0) node[draw,circle,inner sep=2pt,fill=blue, label ={[xshift=0.3cm, yshift=-0.5cm]4}](4){};
  \draw (1.2,0.7) node[draw,circle,inner sep=2pt,fill=blue, label ={[xshift=0.3cm,yshift=-0.2cm]3}](3){};
  \draw (0.6,1.2) node[draw,circle,inner sep=2pt,fill=blue,label={2}](2){};
  \draw[-] (1) edge[red,thick] (2);
  \draw[-] (2) edge[red,thick] (3);
  \draw[-] (3) edge[red,thick] (4);
  \draw[-] (4) edge[red,thick] (5);
  \draw[-] (5) edge[red,thick] (1);
  \end{scope}
 \end{tikzpicture}
&\footnotesize
\begin{tabular}{CCC}
x_{11}-x_{55}&x_{12}-x_{45}&x_{13}-x_{35}\\
x_{22}-x_{55}&x_{23}-x_{45}&x_{14}-x_{35}\\
x_{33}-x_{55}&x_{34}-x_{45}&x_{24}-x_{35}\\
x_{44}-x_{55}&x_{15}-x_{45}&x_{25}-x_{35}\\
&&\\
\end{tabular}
& \begin{tikzpicture}
\begin{scope}
\footnotesize
    \draw (0.2,0) node[draw,circle,inner sep=2pt,fill=blue, label={[xshift=-0.3cm,yshift=-0.5cm]5}](5){};
  \draw (0,0.7) node[draw,circle,inner sep=2pt,fill=blue, label={[xshift=-0.3cm,yshift=-0.2cm]1}](1){};
  \draw (1,0) node[draw,circle,inner sep=2pt,fill=blue, label ={[xshift=0.3cm, yshift=-0.5cm]4}](4){};
  \draw (1.2,0.7) node[draw,circle,inner sep=2pt,fill=blue, label ={[xshift=0.3cm,yshift=-0.2cm]3}](3){};
  \draw (0.6,1.2) node[draw,circle,inner sep=2pt,fill=blue,label={2}](2){};
  \draw[-] (1) edge[red,thick] (2);
  \draw[-] (2) edge[red,thick] (3);
  \draw[-] (3) edge[red,thick] (4);
  \draw[-] (4) edge[red,thick] (5);
  \draw[-] (5) edge[red,thick] (1);
  \draw[-] (1) edge[ForestGreen,thick] (3);
  \draw[-] (2) edge[ForestGreen,thick] (4);
  \draw[-] (3) edge[ForestGreen,thick] (5);
  \draw[-] (4) edge[ForestGreen,thick] (1);
  \draw[-] (5) edge[ForestGreen,thick] (2);
  \end{scope}
 \end{tikzpicture}\\
\hline
 
\begin{tikzpicture}
\begin{scope}
\footnotesize
    \draw (0.2,0) node[draw,circle,inner sep=2pt,fill=blue, label={[xshift=-0.3cm,yshift=-0.5cm]5}](5){};
  \draw (0,0.7) node[draw,circle,inner sep=2pt,fill=blue, label={[xshift=-0.3cm,yshift=-0.2cm]1}](1){};
  \draw (1,0) node[draw,circle,inner sep=2pt,fill=blue, label ={[xshift=0.3cm, yshift=-0.5cm]4}](4){};
  \draw (1.2,0.7) node[draw,circle,inner sep=2pt,fill=blue, label ={[xshift=0.3cm,yshift=-0.2cm]3}](3){};
  \draw (0.6,1.2) node[draw,circle,inner sep=2pt,fill=blue,label={2}](2){};
  \draw[-] (1) edge[ForestGreen,thick] (2);
  \draw[-] (2) edge[red,thick] (3);
  \draw[-] (3) edge[red,thick] (4);
  \draw[-] (4) edge[red,thick] (5);
  \draw[-] (5) edge[red,thick] (1);
  \end{scope}
 \end{tikzpicture}
&\footnotesize
\begin{tabular}{CC}
 x_{11}-x_{22}&x_{15}-x_{23}\\
x_{33}-x_{55}&x_{34}-x_{45}\\
x_{13}-x_{25}&x_{14}-x_{24}  
\end{tabular}
$\highlight{x_{24}+x_{44}-x_{35}-x_{55}}$
&\begin{tikzpicture}
\begin{scope}
\footnotesize
\draw (0.2,0) node[draw,circle,inner sep=2pt,fill=blue, label={[xshift=-0.3cm,yshift=-0.5cm]5}](5){};
\draw (0,0.7) node[draw,circle,inner sep=2pt,fill=BurntOrange, label={[xshift=-0.3cm,yshift=-0.2cm]1}](1){};
\draw (1,0) node[draw,circle,inner sep=2pt,fill=Purple, label ={[xshift=0.3cm, yshift=-0.5cm]4}](4){};
\draw (1.2,0.7) node[draw,circle,inner sep=2pt,fill=blue, label ={[xshift=0.3cm,yshift=-0.2cm]3}](3){};
\draw (0.6,1.2) node[draw,circle,inner sep=2pt,fill=BurntOrange,label={2}](2){};
\draw[-] (1) edge[OliveGreen,thick] (2);
\draw[-] (2) edge[red,thick] (3);
\draw[-] (3) edge[Goldenrod,thick] (4);
\draw[-] (4) edge[Goldenrod,thick] (5);
\draw[-] (5) edge[red,thick] (1);
\draw[-] (2) edge[YellowGreen,thick] (4);
\draw[-] (3) edge[CornflowerBlue,thick] (5);
\draw[-] (4) edge[YellowGreen,thick] (1);
\draw[-] (3) edge[CarnationPink,thick] (1);
\draw[-] (2) edge[CarnationPink,thick] (5);
\end{scope}
\end{tikzpicture}
\\
 \hline \begin{tikzpicture}
\begin{scope}
\footnotesize
    \draw (0.2,0) node[draw,circle,inner sep=2pt,fill=blue, label={[xshift=-0.3cm,yshift=-0.5cm]5}](5){};
  \draw (0,0.7) node[draw,circle,inner sep=2pt,fill=blue, label={[xshift=-0.3cm,yshift=-0.2cm]1}](1){};
  \draw (1,0) node[draw,circle,inner sep=2pt,fill=blue, label ={[xshift=0.3cm, yshift=-0.5cm]4}](4){};
  \draw (1.2,0.7) node[draw,circle,inner sep=2pt,fill=blue, label ={[xshift=0.3cm,yshift=-0.2cm]3}](3){};
  \draw (0.6,1.2) node[draw,circle,inner sep=2pt,fill=blue,label={2}](2){};
  \draw[-] (1) edge[OliveGreen,thick] (2);
  \draw[-] (2) edge[OliveGreen,thick] (3);
  \draw[-] (3) edge[red,thick] (4);
  \draw[-] (4) edge[red,thick] (5);
  \draw[-] (5) edge[red,thick] (1);
  \end{scope}
 \end{tikzpicture}
 &
 \footnotesize
 \begin{tabular}{CC}
 x_{11}-x_{33}& x_{14}-x_{35}\\
 x_{44}-x_{55}& x_{15}-x_{34}\\
 x_{12}-x_{23}& x_{24}-x_{25}\\
\end{tabular}
{\highlight{x_{13}+x_{34}+x_{55}-x_{33}-x_{35}-x_{45}}} 
 & \begin{tikzpicture}
\begin{scope}
\footnotesize
    \draw (0.2,0) node[draw,circle,inner sep=2pt,fill=BurntOrange, label={[xshift=-0.3cm,yshift=-0.5cm]5}](5){};
  \draw (0,0.7) node[draw,circle,inner sep=2pt,fill=blue, label={[xshift=-0.3cm,yshift=-0.2cm]1}](1){};
  \draw (1,0) node[draw,circle,inner sep=2pt,fill=BurntOrange, label ={[xshift=0.3cm, yshift=-0.5cm]4}](4){};
  \draw (1.2,0.7) node[draw,circle,inner sep=2pt,fill=blue, label ={[xshift=0.3cm,yshift=-0.2cm]3}](3){};
  \draw (0.6,1.2) node[draw,circle,inner sep=2pt,fill=Purple,label={2}](2){};
  \draw[-] (1) edge[OliveGreen,thick] (2);
  \draw[-] (2) edge[OliveGreen,thick] (3);
  \draw[-] (3) edge[red,thick] (4);
  \draw[-] (4) edge[Goldenrod,thick] (5);
  \draw[-] (5) edge[red,thick] (1);
  \draw[-] (1) edge[CornflowerBlue,thick] (3);
  \draw[-] (2) edge[YellowGreen,thick] (4);
  \draw[-] (3) edge[CarnationPink,thick] (5);
  \draw[-] (4) edge[CarnationPink,thick] (1);
  \draw[-] (5) edge[YellowGreen,thick] (2);
  \end{scope}
 \end{tikzpicture}\\
 \hline \begin{tikzpicture}
\begin{scope}
\footnotesize
    \draw (0.2,0) node[draw,circle,inner sep=2pt,fill=blue, label={[xshift=-0.3cm,yshift=-0.5cm]5}](5){};
  \draw (0,0.7) node[draw,circle,inner sep=2pt,fill=blue, label={[xshift=-0.3cm,yshift=-0.2cm]1}](1){};
  \draw (1,0) node[draw,circle,inner sep=2pt,fill=blue, label ={[xshift=0.3cm, yshift=-0.5cm]4}](4){};
  \draw (1.2,0.7) node[draw,circle,inner sep=2pt,fill=blue, label ={[xshift=0.3cm,yshift=-0.2cm]3}](3){};
  \draw (0.6,1.2) node[draw,circle,inner sep=2pt,fill=blue,label={2}](2){};
  \draw[-] (1) edge[OliveGreen,thick] (2);
  \draw[-] (2) edge[red,thick] (3);
  \draw[-] (3) edge[OliveGreen,thick] (4);
  \draw[-] (4) edge[red,thick] (5);
  \draw[-] (5) edge[red,thick] (1);
  \end{scope}
 \end{tikzpicture}
 &\footnotesize\vspace*{-1cm}\begin{tabular}{CC}
x_{11}-x_{44}&x_{13}-x_{24}\\
x_{22}-x_{33}&x_{15}-x_{45}\\
x_{12}-x_{34}&x_{25}-x_{35}   
\end{tabular}
 & \begin{tikzpicture}
\begin{scope}
\footnotesize
    \draw (0.2,0) node[draw,circle,inner sep=2pt,fill=Purple, label={[xshift=-0.3cm,yshift=-0.5cm]5}](5){};
  \draw (0,0.7) node[draw,circle,inner sep=2pt,fill=blue, label={[xshift=-0.3cm,yshift=-0.2cm]1}](1){};
  \draw (1,0) node[draw,circle,inner sep=2pt,fill=blue, label ={[xshift=0.3cm, yshift=-0.5cm]4}](4){};
  \draw (1.2,0.7) node[draw,circle,inner sep=2pt,fill=BurntOrange, label ={[xshift=0.3cm,yshift=-0.2cm]3}](3){};
  \draw (0.6,1.2) node[draw,circle,inner sep=2pt,fill=BurntOrange,label={2}](2){};
  \draw[-] (1) edge[OliveGreen,thick] (2);
  \draw[-] (2) edge[Goldenrod,thick] (3);
  \draw[-] (3) edge[OliveGreen,thick] (4);
  \draw[-] (4) edge[red,thick] (5);
  \draw[-] (5) edge[red,thick] (1);
  \draw[-] (1) edge[CarnationPink,thick] (3);
  \draw[-] (2) edge[CarnationPink,thick] (4);
  \draw[-] (3) edge[YellowGreen,thick] (5);
  \draw[-] (4) edge[CornflowerBlue,thick] (1);
  \draw[-] (5) edge[YellowGreen,thick] (2);
  \end{scope}
 \end{tikzpicture}\\
 \hline \begin{tikzpicture}
\begin{scope}
\footnotesize
    \draw (0.2,0) node[draw,circle,inner sep=2pt,fill=blue, label={[xshift=-0.3cm,yshift=-0.5cm]5}](5){};
  \draw (0,0.7) node[draw,circle,inner sep=2pt,fill=blue, label={[xshift=-0.3cm,yshift=-0.2cm]1}](1){};
  \draw (1,0) node[draw,circle,inner sep=2pt,fill=blue, label ={[xshift=0.3cm, yshift=-0.5cm]4}](4){};
  \draw (1.2,0.7) node[draw,circle,inner sep=2pt,fill=blue, label ={[xshift=0.3cm,yshift=-0.2cm]3}](3){};
  \draw (0.6,1.2) node[draw,circle,inner sep=2pt,fill=blue,label={2}](2){};
  \draw[-] (1) edge[OliveGreen,thick] (2);
  \draw[-] (2) edge[OliveGreen,thick] (3);
  \draw[-] (3) edge[Goldenrod,thick] (4);
  \draw[-] (4) edge[red,thick] (5);
  \draw[-] (5) edge[red,thick] (1);
  \end{scope}
 \end{tikzpicture}
 &\footnotesize None.
 &\begin{tikzpicture}
\begin{scope}
\footnotesize
    \draw (0.2,0) node[draw,circle,inner sep=2pt,fill, label={[xshift=-0.3cm,yshift=-0.5cm]5}](5){};
  \draw (0,0.7) node[draw,circle,inner sep=2pt,fill, label={[xshift=-0.3cm,yshift=-0.2cm]1}](1){};
  \draw (1,0) node[draw,circle,inner sep=2pt,fill, label ={[xshift=0.3cm, yshift=-0.5cm]4}](4){};
  \draw (1.2,0.7) node[draw,circle,inner sep=2pt,fill, label ={[xshift=0.3cm,yshift=-0.2cm]3}](3){};
  \draw (0.6,1.2) node[draw,circle,inner sep=2pt,fill,label={2}](2){};
  \draw[-] (1) edge[thick] (2);
  \draw[-] (2) edge[thick] (3);
  \draw[-] (3) edge[thick] (4);
  \draw[-] (4) edge[thick] (5);
  \draw[-] (5) edge[thick] (1);
  \draw[-] (1) edge[thick] (3);
  \draw[-] (2) edge[thick] (4);
  \draw[-] (3) edge[thick] (5);
  \draw[-] (4) edge[thick] (1);
  \draw[-] (5) edge[thick] (2);
  \end{scope}
 \end{tikzpicture}\\\hline
 \begin{tikzpicture}
\begin{scope}
\footnotesize
    \draw (0.2,0) node[draw,circle,inner sep=2pt,fill=blue, label={[xshift=-0.3cm,yshift=-0.5cm]5}](5){};
  \draw (0,0.7) node[draw,circle,inner sep=2pt,fill=blue, label={[xshift=-0.3cm,yshift=-0.2cm]1}](1){};
  \draw (1,0) node[draw,circle,inner sep=2pt,fill=blue, label ={[xshift=0.3cm, yshift=-0.5cm]4}](4){};
  \draw (1.2,0.7) node[draw,circle,inner sep=2pt,fill=blue, label ={[xshift=0.3cm,yshift=-0.2cm]3}](3){};
  \draw (0.6,1.2) node[draw,circle,inner sep=2pt,fill=blue,label={2}](2){};
  \draw[-] (1) edge[OliveGreen,thick] (2);
  \draw[-] (2) edge[red,thick] (3);
  \draw[-] (3) edge[Goldenrod,thick] (4);
  \draw[-] (4) edge[red,thick] (5);
  \draw[-] (5) edge[red,thick] (1);
  \end{scope}
 \end{tikzpicture}
 &\footnotesize None.
 &\begin{tikzpicture}
\begin{scope}
\footnotesize
    \draw (0.2,0) node[draw,circle,inner sep=2pt,fill, label={[xshift=-0.3cm,yshift=-0.5cm]5}](5){};
  \draw (0,0.7) node[draw,circle,inner sep=2pt,fill, label={[xshift=-0.3cm,yshift=-0.2cm]1}](1){};
  \draw (1,0) node[draw,circle,inner sep=2pt,fill, label ={[xshift=0.3cm, yshift=-0.5cm]4}](4){};
  \draw (1.2,0.7) node[draw,circle,inner sep=2pt,fill, label ={[xshift=0.3cm,yshift=-0.2cm]3}](3){};
  \draw (0.6,1.2) node[draw,circle,inner sep=2pt,fill,label={2}](2){};
  \draw[-] (1) edge[thick] (2);
  \draw[-] (2) edge[thick] (3);
  \draw[-] (3) edge[thick] (4);
  \draw[-] (4) edge[thick] (5);
  \draw[-] (5) edge[thick] (1);
  \draw[-] (1) edge[thick] (3);
  \draw[-] (2) edge[thick] (4);
  \draw[-] (3) edge[thick] (5);
  \draw[-] (4) edge[thick] (1);
  \draw[-] (5) edge[thick] (2);
  \end{scope}
 \end{tikzpicture}\\\hline
  \end{tabular}
  \caption{Linear minimal generators of $I(\mathcal L^{-1})$ for coloured five-cycles. The left column of the table shows the given coloured five-cycle $G$ with all vertices having the same colour. The linear generators of the corresponding reciprocal varieties are shown in the middle column. Highlighted are the generators not induced by symmetries, i.e.\ not found with Proposition~\ref{symmetries}. The right column shows the graph $G'$ associated to the symmetries of $G$.
  If $I'$ is the zero ideal, we give $G'$ as the uncoloured complete graph on $5$ vertices.
  }
  \label{table2}
 \end{center}
\end{table}
 
 \begin{table}[htbp]
\begin{center} 
\begin{tabular}{|>{\centering\arraybackslash}m{3cm}|>{\centering\arraybackslash}m{5cm}|>{\centering\arraybackslash}m{3cm}|} \hline $G$ & Linear Forms & $G'$\\\hline
\begin{tikzpicture}
\begin{scope}
\footnotesize
    \draw (0.2,0) node[draw,circle,inner sep=2pt,fill=blue, label={[xshift=-0.3cm,yshift=-0.5cm]5}](5){};
  \draw (0,0.7) node[draw,circle,inner sep=2pt,fill=blue, label={[xshift=-0.3cm,yshift=-0.2cm]1}](1){};
  \draw (1,0) node[draw,circle,inner sep=2pt,fill=blue, label ={[xshift=0.3cm, yshift=-0.5cm]4}](4){};
  \draw (1.2,0.7) node[draw,circle,inner sep=2pt,fill=blue, label ={[xshift=0.3cm,yshift=-0.2cm]3}](3){};
  \draw (0.6,1.2) node[draw,circle,inner sep=2pt,fill=blue,label={2}](2){};
  \draw[-] (1) edge[red,thick] (2);
  \draw[-] (2) edge[OliveGreen,thick] (3);
  \draw[-] (3) edge[Goldenrod,thick] (4);
  \draw[-] (4) edge[red,thick] (5);
  \draw[-] (5) edge[red,thick] (1);
  \end{scope}
 \end{tikzpicture}
 &\footnotesize None.
 &\begin{tikzpicture}
\begin{scope}
\footnotesize
    \draw (0.2,0) node[draw,circle,inner sep=2pt,fill, label={[xshift=-0.3cm,yshift=-0.5cm]5}](5){};
  \draw (0,0.7) node[draw,circle,inner sep=2pt,fill, label={[xshift=-0.3cm,yshift=-0.2cm]1}](1){};
  \draw (1,0) node[draw,circle,inner sep=2pt,fill, label ={[xshift=0.3cm, yshift=-0.5cm]4}](4){};
  \draw (1.2,0.7) node[draw,circle,inner sep=2pt,fill, label ={[xshift=0.3cm,yshift=-0.2cm]3}](3){};
  \draw (0.6,1.2) node[draw,circle,inner sep=2pt,fill,label={2}](2){};
  \draw[-] (1) edge[thick] (2);
  \draw[-] (2) edge[thick] (3);
  \draw[-] (3) edge[thick] (4);
  \draw[-] (4) edge[thick] (5);
  \draw[-] (5) edge[thick] (1);
  \draw[-] (1) edge[thick] (3);
  \draw[-] (2) edge[thick] (4);
  \draw[-] (3) edge[thick] (5);
  \draw[-] (4) edge[thick] (1);
  \draw[-] (5) edge[thick] (2);
  \end{scope}
 \end{tikzpicture}\\ \hline

 \begin{tikzpicture}
\begin{scope}
\footnotesize
    \draw (0.2,0) node[draw,circle,inner sep=2pt,fill=blue, label={[xshift=-0.3cm,yshift=-0.5cm]5}](5){};
  \draw (0,0.7) node[draw,circle,inner sep=2pt,fill=blue, label={[xshift=-0.3cm,yshift=-0.2cm]1}](1){};
  \draw (1,0) node[draw,circle,inner sep=2pt,fill=blue, label ={[xshift=0.3cm, yshift=-0.5cm]4}](4){};
  \draw (1.2,0.7) node[draw,circle,inner sep=2pt,fill=blue, label ={[xshift=0.3cm,yshift=-0.2cm]3}](3){};
  \draw (0.6,1.2) node[draw,circle,inner sep=2pt,fill=blue,label={2}](2){};
  \draw[-] (1) edge[OliveGreen,thick] (2);
  \draw[-] (2) edge[OliveGreen,thick] (3);
  \draw[-] (3) edge[red,thick] (4);
  \draw[-] (4) edge[Goldenrod,thick] (5);
  \draw[-] (5) edge[red,thick] (1);
  \end{scope}
 \end{tikzpicture}
 &\footnotesize\vspace*{-1cm}\begin{tabular}{CC}
 x_{11}-x_{33}&x_{14}-x_{35}\\
 x_{44}-x_{55}& x_{15}-x_{34}\\
 x_{12}-x_{23}& x_{24}-x_{25}
  \end{tabular}
 & \begin{tikzpicture}
\begin{scope}
\footnotesize
    \draw (0.2,0) node[draw,circle,inner sep=2pt,fill=BurntOrange, label={[xshift=-0.3cm,yshift=-0.5cm]5}](5){};
  \draw (0,0.7) node[draw,circle,inner sep=2pt,fill=blue, label={[xshift=-0.3cm,yshift=-0.2cm]1}](1){};
  \draw (1,0) node[draw,circle,inner sep=2pt,fill=BurntOrange, label ={[xshift=0.3cm, yshift=-0.5cm]4}](4){};
  \draw (1.2,0.7) node[draw,circle,inner sep=2pt,fill=blue, label ={[xshift=0.3cm,yshift=-0.2cm]3}](3){};
  \draw (0.6,1.2) node[draw,circle,inner sep=2pt,fill=Purple,label={2}](2){};
  \draw[-] (1) edge[OliveGreen,thick] (2);
  \draw[-] (2) edge[OliveGreen,thick] (3);
  \draw[-] (3) edge[red,thick] (4);
  \draw[-] (4) edge[Goldenrod,thick] (5);
  \draw[-] (5) edge[red,thick] (1);
  \draw[-] (1) edge[CornflowerBlue,thick] (3);
  \draw[-] (2) edge[YellowGreen,thick] (4);
  \draw[-] (3) edge[CarnationPink,thick] (5);
  \draw[-] (4) edge[CarnationPink,thick] (1);
  \draw[-] (5) edge[YellowGreen,thick] (2);
  \end{scope}
 \end{tikzpicture}\\ \hline
 \begin{tikzpicture}
\begin{scope}
\footnotesize
    \draw (0.2,0) node[draw,circle,inner sep=2pt,fill=blue, label={[xshift=-0.3cm,yshift=-0.5cm]5}](5){};
  \draw (0,0.7) node[draw,circle,inner sep=2pt,fill=blue, label={[xshift=-0.3cm,yshift=-0.2cm]1}](1){};
  \draw (1,0) node[draw,circle,inner sep=2pt,fill=blue, label ={[xshift=0.3cm, yshift=-0.5cm]4}](4){};
  \draw (1.2,0.7) node[draw,circle,inner sep=2pt,fill=blue, label ={[xshift=0.3cm,yshift=-0.2cm]3}](3){};
  \draw (0.6,1.2) node[draw,circle,inner sep=2pt,fill=blue,label={2}](2){};
  \draw[-] (1) edge[OliveGreen,thick] (2);
  \draw[-] (2) edge[Goldenrod,thick] (3);
  \draw[-] (3) edge[red,thick] (4);
  \draw[-] (4) edge[red,thick] (5);
  \draw[-] (5) edge[red,thick] (1);
  \end{scope}
 \end{tikzpicture}
 & \footnotesize $\highlight{x_{14}+x_{44}-x_{35}-x_{55}}$
 & \begin{tikzpicture}
\begin{scope}
\footnotesize
    \draw (0.2,0) node[draw,circle,inner sep=2pt,fill, label={[xshift=-0.3cm,yshift=-0.5cm]5}](5){};
  \draw (0,0.7) node[draw,circle,inner sep=2pt,fill, label={[xshift=-0.3cm,yshift=-0.2cm]1}](1){};
  \draw (1,0) node[draw,circle,inner sep=2pt,fill, label ={[xshift=0.3cm, yshift=-0.5cm]4}](4){};
  \draw (1.2,0.7) node[draw,circle,inner sep=2pt,fill, label ={[xshift=0.3cm,yshift=-0.2cm]3}](3){};
  \draw (0.6,1.2) node[draw,circle,inner sep=2pt,fill,label={2}](2){};
  \draw[-] (1) edge[thick] (2);
  \draw[-] (2) edge[thick] (3);
  \draw[-] (3) edge[thick] (4);
  \draw[-] (4) edge[thick] (5);
  \draw[-] (5) edge[thick] (1);
  \draw[-] (1) edge[thick] (3);
  \draw[-] (2) edge[thick] (4);
  \draw[-] (3) edge[thick] (5);
  \draw[-] (4) edge[thick] (1);
  \draw[-] (5) edge[thick] (2);
  \end{scope}
 \end{tikzpicture}\\\hline
 
 \begin{tikzpicture}
\begin{scope}
\footnotesize
    \draw (0.2,0) node[draw,circle,inner sep=2pt,fill=blue, label={[xshift=-0.3cm,yshift=-0.5cm]5}](5){};
  \draw (0,0.7) node[draw,circle,inner sep=2pt,fill=blue, label={[xshift=-0.3cm,yshift=-0.2cm]1}](1){};
  \draw (1,0) node[draw,circle,inner sep=2pt,fill=blue, label ={[xshift=0.3cm, yshift=-0.5cm]4}](4){};
  \draw (1.2,0.7) node[draw,circle,inner sep=2pt,fill=blue, label ={[xshift=0.3cm,yshift=-0.2cm]3}](3){};
  \draw (0.6,1.2) node[draw,circle,inner sep=2pt,fill=blue,label={2}](2){};
  \draw[-] (1) edge[OliveGreen,thick] (2);
  \draw[-] (2) edge[Goldenrod,thick] (3);
  \draw[-] (3) edge[CarnationPink,thick] (4);
  \draw[-] (4) edge[red,thick] (5);
  \draw[-] (5) edge[red,thick] (1);
  \end{scope}
 \end{tikzpicture}
 &\footnotesize None.
 & \begin{tikzpicture}
\begin{scope}
\footnotesize
    \draw (0.2,0) node[draw,circle,inner sep=2pt,fill, label={[xshift=-0.3cm,yshift=-0.5cm]5}](5){};
  \draw (0,0.7) node[draw,circle,inner sep=2pt,fill, label={[xshift=-0.3cm,yshift=-0.2cm]1}](1){};
  \draw (1,0) node[draw,circle,inner sep=2pt,fill, label ={[xshift=0.3cm, yshift=-0.5cm]4}](4){};
  \draw (1.2,0.7) node[draw,circle,inner sep=2pt,fill, label ={[xshift=0.3cm,yshift=-0.2cm]3}](3){};
  \draw (0.6,1.2) node[draw,circle,inner sep=2pt,fill,label={2}](2){};
  \draw[-] (1) edge[thick] (2);
  \draw[-] (2) edge[thick] (3);
  \draw[-] (3) edge[thick] (4);
  \draw[-] (4) edge[thick] (5);
  \draw[-] (5) edge[thick] (1);
  \draw[-] (1) edge[thick] (3);
  \draw[-] (2) edge[thick] (4);
  \draw[-] (3) edge[thick] (5);
  \draw[-] (4) edge[thick] (1);
  \draw[-] (5) edge[thick] (2);
  \end{scope}
 \end{tikzpicture}\\\hline
  \begin{tikzpicture}
\begin{scope}
\footnotesize
    \draw (0.2,0) node[draw,circle,inner sep=2pt,fill=blue, label={[xshift=-0.3cm,yshift=-0.5cm]5}](5){};
  \draw (0,0.7) node[draw,circle,inner sep=2pt,fill=blue, label={[xshift=-0.3cm,yshift=-0.2cm]1}](1){};
  \draw (1,0) node[draw,circle,inner sep=2pt,fill=blue, label ={[xshift=0.3cm, yshift=-0.5cm]4}](4){};
  \draw (1.2,0.7) node[draw,circle,inner sep=2pt,fill=blue, label ={[xshift=0.3cm,yshift=-0.2cm]3}](3){};
  \draw (0.6,1.2) node[draw,circle,inner sep=2pt,fill=blue,label={2}](2){};
  \draw[-] (1) edge[OliveGreen,thick] (2);
  \draw[-] (2) edge[Goldenrod,thick] (3);
  \draw[-] (3) edge[red,thick] (4);
  \draw[-] (4) edge[CarnationPink,thick] (5);
  \draw[-] (5) edge[red,thick] (1);
  \end{scope}
 \end{tikzpicture}
 &\footnotesize None.
 & \begin{tikzpicture}
\begin{scope}
 \footnotesize    
 \draw (0.2,0) node[draw,circle,inner sep=2pt,fill, label={[xshift=-0.3cm,yshift=-0.5cm]5}](5){};
  \draw (0,0.7) node[draw,circle,inner sep=2pt,fill, label={[xshift=-0.3cm,yshift=-0.2cm]1}](1){};
  \draw (1,0) node[draw,circle,inner sep=2pt,fill, label ={[xshift=0.3cm, yshift=-0.5cm]4}](4){};
  \draw (1.2,0.7) node[draw,circle,inner sep=2pt,fill, label ={[xshift=0.3cm,yshift=-0.2cm]3}](3){};
  \draw (0.6,1.2) node[draw,circle,inner sep=2pt,fill,label={2}](2){};
  \draw[-] (1) edge[thick] (2);
  \draw[-] (2) edge[thick] (3);
  \draw[-] (3) edge[thick] (4);
  \draw[-] (4) edge[thick] (5);
  \draw[-] (5) edge[thick] (1);
  \draw[-] (1) edge[thick] (3);
  \draw[-] (2) edge[thick] (4);
  \draw[-] (3) edge[thick] (5);
  \draw[-] (4) edge[thick] (1);
  \draw[-] (5) edge[thick] (2);
  \end{scope}
 \end{tikzpicture}\\\hline
 \begin{tikzpicture}
\begin{scope}
\footnotesize
    \draw (0.2,0) node[draw,circle,inner sep=2pt,fill=blue, label={[xshift=-0.3cm,yshift=-0.5cm]5}](5){};
  \draw (0,0.7) node[draw,circle,inner sep=2pt,fill=blue, label={[xshift=-0.3cm,yshift=-0.2cm]1}](1){};
  \draw (1,0) node[draw,circle,inner sep=2pt,fill=blue, label ={[xshift=0.3cm, yshift=-0.5cm]4}](4){};
  \draw (1.2,0.7) node[draw,circle,inner sep=2pt,fill=blue, label ={[xshift=0.3cm,yshift=-0.2cm]3}](3){};
  \draw (0.6,1.2) node[draw,circle,inner sep=2pt,fill=blue,label={2}](2){};
  \draw[-] (1) edge[OliveGreen,thick] (2);
  \draw[-] (2) edge[CarnationPink,thick] (3);
  \draw[-] (3) edge[Goldenrod,thick] (4);
  \draw[-] (4) edge[CornflowerBlue,thick] (5);
  \draw[-] (5) edge[red,thick] (1);
  \end{scope}
 \end{tikzpicture}
 &\footnotesize None.
 &\begin{tikzpicture}
\begin{scope}
\footnotesize
    \draw (0.2,0) node[draw,circle,inner sep=2pt,fill, label={[xshift=-0.3cm,yshift=-0.5cm]5}](5){};
  \draw (0,0.7) node[draw,circle,inner sep=2pt,fill, label={[xshift=-0.3cm,yshift=-0.2cm]1}](1){};
  \draw (1,0) node[draw,circle,inner sep=2pt,fill, label ={[xshift=0.3cm, yshift=-0.5cm]4}](4){};
  \draw (1.2,0.7) node[draw,circle,inner sep=2pt,fill, label ={[xshift=0.3cm,yshift=-0.2cm]3}](3){};
  \draw (0.6,1.2) node[draw,circle,inner sep=2pt,fill,label={2}](2){};
  \draw[-] (1) edge[thick] (2);
  \draw[-] (2) edge[thick] (3);
  \draw[-] (3) edge[thick] (4);
  \draw[-] (4) edge[thick] (5);
  \draw[-] (5) edge[thick] (1);
  \draw[-] (1) edge[thick] (3);
  \draw[-] (2) edge[thick] (4);
  \draw[-] (3) edge[thick] (5);
  \draw[-] (4) edge[thick] (1);
  \draw[-] (5) edge[thick] (2);
  \end{scope}
 \end{tikzpicture}\\\hline 
  \end{tabular}
  \caption{Continuation of Table~\ref{table2}.}
  \label{table3}
 \end{center}
\end{table}

\begin{table}[htbp]
\begin{center} 
\begin{tabular}{|>{\centering\arraybackslash}m{0.3cm}|>{\centering\arraybackslash}m{6cm}|>{\centering\arraybackslash}m{3.5cm}|} \hline $n$ & Quadratic Forms & Graph $G'$\\
 \hline
 \footnotesize
 $3$
 &\footnotesize None.
 &\begin{tikzpicture}
    \begin{scope}[scale=1.5]
    \footnotesize
  \draw (0,0) node[draw,circle,inner sep=2pt,fill=blue, label={[xshift=-0.2cm]2}](2){};
  \draw (0,1) node[draw,circle,inner sep=2pt,fill=blue, label={1}](1){};
  \draw (1,0) node[draw,circle,inner sep=2pt,fill=blue, label ={3}](3){};
  \draw[-] (1) edge[red,thick] (2);
  \draw[-] (2) edge[red,thick] (3);
  \draw[-] (1) edge[red,thick] (3);
  \end{scope}
 \end{tikzpicture}\\\hline
\footnotesize 
$4$
 & \footnotesize $x_{13}^{2}-2x_{12}^2+x_{13}x_{11}$
 &
 \begin{tikzpicture}
\begin{scope}[scale=1.5]
\footnotesize
    \draw (0,0) node[draw,circle,inner sep=2pt,fill=blue, label={[xshift=-0.3cm,yshift=-0.5cm]4}](4){};
  \draw (0,1) node[draw,circle,inner sep=2pt,fill=blue, label={[xshift=-0.3cm,yshift=-0.2cm]1}](1){};
  \draw (1,0) node[draw,circle,inner sep=2pt,fill=blue, label ={[xshift=0.3cm, yshift=-0.5cm]3}](3){};
  \draw (1,1) node[draw,circle,inner sep=2pt,fill=blue, label ={[xshift=0.3cm,yshift=-0.2cm]2}](2){};
  \draw[-] (1) edge[red,thick] (2);
  \draw[-] (2) edge[red,thick] (3);
  \draw[-] (3) edge[red,thick] (4);
  \draw[-] (1) edge[red,thick] (4);
  \draw[-] (2) edge[OliveGreen,thick] (4);
  \draw[-] (1) edge[OliveGreen,thick] (3);
  \end{scope}
 \end{tikzpicture}\\\hline
 \footnotesize
 $5$
& \footnotesize $x_{13}^{2}-x_{13}x_{12}-x_{12}^2+x_{13}x_{11}$
& 
\begin{tikzpicture}
\begin{scope}[scale=1.5]
\footnotesize
    \draw (0.2,0) node[draw,circle,inner sep=2pt,fill=blue, label={[xshift=-0.3cm,yshift=-0.5cm]5}](5){};
  \draw (0,0.7) node[draw,circle,inner sep=2pt,fill=blue, label={[xshift=-0.3cm,yshift=-0.2cm]1}](1){};
  \draw (1,0) node[draw,circle,inner sep=2pt,fill=blue, label ={[xshift=0.3cm, yshift=-0.5cm]4}](4){};
  \draw (1.2,0.7) node[draw,circle,inner sep=2pt,fill=blue, label ={[xshift=0.3cm,yshift=-0.2cm]3}](3){};
  \draw (0.6,1.2) node[draw,circle,inner sep=2pt,fill=blue,label={2}](2){};
  \draw[-] (1) edge[red,thick] (2);
  \draw[-] (2) edge[red,thick] (3);
  \draw[-] (3) edge[red,thick] (4);
  \draw[-] (4) edge[red,thick] (5);
  \draw[-] (5) edge[red,thick] (1);
  \draw[-] (1) edge[OliveGreen,thick] (3);
  \draw[-] (2) edge[OliveGreen,thick] (4);
  \draw[-] (3) edge[OliveGreen,thick] (5);
  \draw[-] (4) edge[OliveGreen,thick] (1);
  \draw[-] (5) edge[OliveGreen,thick] (2);
  \end{scope}
 \end{tikzpicture}\\\hline
 \footnotesize
6
&\vspace*{-1cm}\footnotesize\begin{tabular}{C}
2x_{13}^2-x_{12}x_{14}-x_{14}^2\\
2x_{12}x_{13}-x_{11}x_{14}-x_{13}x_{14}\\
2x_{12}^2-2x_{11}x_{13}+x_{12}x_{14}-x_{14}^2
\end{tabular}
&
\begin{tikzpicture}
\begin{scope}[scale=1.5]
\footnotesize
    \draw (0,0) node[draw,circle,inner sep=2pt,fill=blue, label={[xshift=-0.3cm,yshift=-0.5cm]5}](5){};
  \draw (0,1.5) node[draw,circle,inner sep=2pt,fill=blue, label={[xshift=-0.3cm,yshift=-0.2cm]1}](1){};
  \draw (0.9,0) node[draw,circle,inner sep=2pt,fill=blue, label ={[xshift=0.3cm, yshift=-0.5cm]4}](4){};
  \draw (0.9,1.5) node[draw,circle,inner sep=2pt,fill=blue, label ={[xshift=0.3cm,yshift=-0.2cm]2}](2){};
  \draw (-0.4,0.75) node[draw,circle,inner sep=2pt,fill=blue, label ={[xshift=-0.3cm, yshift=-0.3cm]6}](6){};
  \draw (1.3,0.75) node[draw,circle,inner sep=2pt,fill=blue, label ={[xshift=0.3cm,yshift=-0.3cm]3}](3){};
  \draw[-] (1) edge[red,thick] (2);
  \draw[-] (2) edge[red,thick] (3);
  \draw[-] (3) edge[red,thick] (4);
  \draw[-] (4) edge[red,thick] (5);
  \draw[-] (5) edge[red,thick] (6);
  \draw[-] (6) edge[red,thick] (1);
  \draw[-] (1) edge[OliveGreen,thick] (3);
  \draw[-] (2) edge[OliveGreen,thick] (4);
  \draw[-] (3) edge[OliveGreen,thick] (5);
  \draw[-] (4) edge[OliveGreen,thick] (6);
  \draw[-] (5) edge[OliveGreen,thick] (1);
  \draw[-] (6) edge[OliveGreen,thick] (2);
  \draw[-] (1) edge[Goldenrod,thick] (4);
  \draw[-] (2) edge[Goldenrod,thick] (5);
  \draw[-] (3) edge[Goldenrod,thick] (6);
  \end{scope}
\end{tikzpicture}\\\hline
\footnotesize
7
&\vspace*{-1cm}\footnotesize\begin{tabular}{C}
x_{13}^2-x_{12}x_{14}+x_{13}x_{14}-x_{14}^2\\
x_{12}x_{13}-x_{11}x_{14}+x_{12}x_{14}-x_{13}x_{14}\\
x_{12}^2-x_{11}x_{13}+x_{13}x_{14}-x_{14}^2
\end{tabular}
&\begin{tikzpicture}
\begin{scope}[scale=1.5]
\footnotesize
    \draw (0,-0.1) node[draw,circle,inner sep=2pt,fill=blue, label={[xshift=-0.3cm,yshift=-0.5cm]6}](6){};
  \draw (-0.3,1.2) node[draw,circle,inner sep=2pt,fill=blue, label={[xshift=-0.3cm,yshift=-0.2cm]1}](1){};
  \draw (0.8,-0.1) node[draw,circle,inner sep=2pt,fill=blue, label ={[xshift=0.3cm, yshift=-0.5cm]5}](5){};
  \draw (1.1,1.2) node[draw,circle,inner sep=2pt,fill=blue, label ={[xshift=0.3cm,yshift=-0.2cm]3}](3){};
  \draw (-0.4,0.5) node[draw,circle,inner sep=2pt,fill=blue, label ={[xshift=-0.3cm, yshift=-0.3cm]7}](7){};
  \draw (1.2,0.5) node[draw,circle,inner sep=2pt,fill=blue, label ={[xshift=0.3cm,yshift=-0.3cm]4}](4){};
  \draw(0.4,1.6) node[draw,circle,inner sep=2pt,fill=blue, label={[xshift=0cm,yshift=0cm]2}](2){};
  \draw[-] (1) edge[red,thick] (2);
  \draw[-] (2) edge[red,thick] (3);
  \draw[-] (3) edge[red,thick] (4);
  \draw[-] (4) edge[red,thick] (5);
  \draw[-] (5) edge[red,thick] (6);
  \draw[-] (6) edge[red,thick] (7);
  \draw[-] (7) edge[red,thick] (1);
  
  \draw[-] (1) edge[OliveGreen,thick] (3);
  \draw[-] (2) edge[OliveGreen,thick] (4);
  \draw[-] (3) edge[OliveGreen,thick] (5);
  \draw[-] (4) edge[OliveGreen,thick] (6);
  \draw[-] (5) edge[OliveGreen,thick] (7);
  \draw[-] (6) edge[OliveGreen,thick] (1);
  \draw[-] (7) edge[OliveGreen,thick] (2);
  
  \draw[-] (1) edge[Goldenrod,thick] (4);
  \draw[-] (2) edge[Goldenrod,thick] (5);
  \draw[-] (3) edge[Goldenrod,thick] (6);
  \draw[-] (4) edge[Goldenrod,thick] (7);
  \draw[-] (5) edge[Goldenrod,thick] (1);
  \draw[-] (6) edge[Goldenrod,thick] (2);
  \draw[-] (7) edge[Goldenrod,thick] (3);
  \end{scope}
\end{tikzpicture}\\\hline
\footnotesize
8
&\vspace*{-1cm}\footnotesize\begin{tabular}{C}
2x_{14}^2-x_{13}x_{15}-x_{15}^2\\
2x_{13}x_{14}-x_{12}x_{15}-x_{14}x_{15}\\
2x_{12}x_{14}-x_{11}x_{15}-x_{13}x_{15}\\
2x_{13}^2-x_{11}x_{15}-x_{15}^2\\
2x_{12}x_{13}-2x_{11}x_{14}+x_{12}x_{15}-x_{14}x_{15}\\
2x_{12}^2-2x_{11}x_{13}+x_{13}x_{15}-x_{15}^2\\
\\
\end{tabular}
&\begin{tikzpicture}
\begin{scope}[scale=1.5]
\footnotesize
    \draw (0.1,-0.1) node[draw,circle,inner sep=2pt,fill=blue, label={[xshift=-0.3cm,yshift=-0.5cm]6}](6){};
  \draw (0.1,1.6) node[draw,circle,inner sep=2pt,fill=blue, label={[xshift=-0.3cm,yshift=-0.2cm]1}](1){};
  \draw (0.8,-0.1) node[draw,circle,inner sep=2pt,fill=blue, label ={[xshift=0.3cm, yshift=-0.5cm]5}](5){};
  \draw (0.8,1.6) node[draw,circle,inner sep=2pt,fill=blue, label ={[xshift=0.3cm,yshift=-0.2cm]2}](2){};
  \draw (-0.4,1.1) node[draw,circle,inner sep=2pt,fill=blue, label ={[xshift=-0.3cm, yshift=-0.3cm]8}](8){};
  \draw (1.3,1.1) node[draw,circle,inner sep=2pt,fill=blue, label ={[xshift=0.3cm,yshift=-0.3cm]3}](3){};
  \draw (-0.4,0.4) node[draw,circle,inner sep=2pt,fill=blue, label ={[xshift=-0.3cm, yshift=-0.3cm]7}](7){};
  \draw (1.3,0.4) node[draw,circle,inner sep=2pt,fill=blue, label ={[xshift=0.3cm,yshift=-0.3cm]4}](4){};
  \draw[-] (1) edge[red,thick] (2);
  \draw[-] (2) edge[red,thick] (3);
  \draw[-] (3) edge[red,thick] (4);
  \draw[-] (4) edge[red,thick] (5);
  \draw[-] (5) edge[red,thick] (6);
  \draw[-] (6) edge[red,thick] (7);
  \draw[-] (7) edge[red,thick] (8);
  \draw[-] (8) edge[red,thick] (1);
  
  \draw[-] (1) edge[OliveGreen,thick] (3);
  \draw[-] (2) edge[OliveGreen,thick] (4);
  \draw[-] (3) edge[OliveGreen,thick] (5);
  \draw[-] (4) edge[OliveGreen,thick] (6);
  \draw[-] (5) edge[OliveGreen,thick] (7);
  \draw[-] (6) edge[OliveGreen,thick] (8);
  \draw[-] (7) edge[OliveGreen,thick] (1);
  \draw[-] (8) edge[OliveGreen,thick] (2);
  
  \draw[-] (1) edge[Goldenrod,thick] (4);
  \draw[-] (2) edge[Goldenrod,thick] (5);
  \draw[-] (3) edge[Goldenrod,thick] (6);
  \draw[-] (4) edge[Goldenrod,thick] (7);
  \draw[-] (5) edge[Goldenrod,thick] (8);
  \draw[-] (6) edge[Goldenrod,thick] (1);
  \draw[-] (7) edge[Goldenrod,thick] (2);
  \draw[-] (8) edge[Goldenrod,thick] (3);
  
  \draw[-] (1) edge[CarnationPink,thick] (5);
  \draw[-] (2) edge[CarnationPink,thick] (6);
  \draw[-] (3) edge[CarnationPink,thick] (7);
  \draw[-] (4) edge[CarnationPink,thick] (8);
 
  \end{scope}
\end{tikzpicture}\\\hline
  \end{tabular}
  \caption{Quadratic forms in a minimal generating set of $I(\mathcal L^{-1})$ for the first eight uniform coloured $n$-cycles. The right column displays the graph $G'$ representing all linear forms in the minimal generating set (by Theorem~\ref{circulant-families}).}
  \label{table4}
 \end{center}
\end{table}
\newpage
\bibliography{literature}


\end{document}